\newtheorem{theorem}{Theorem}[section]
\newtheorem{corollary}[theorem]{Corollary}
\newtheorem{definition}{Definition}
\newtheorem{lemma}[theorem]{Lemma}
\newtheorem{proposition}[theorem]{Proposition}
\newtheorem{remark}{Remark}
\newenvironment{proof}[1][Proof]{\textbf{#1.} }{\ \rule{0.5em}{0.5em}}
\begin{document}

\begin{center}
{\huge A 2-variable power series approach to the Riemann hypothesis }

\bigskip

{\huge \bigskip}

{\large Vincent Brugidou}$^{a,\,b}$ 
\footnotetext{\textit{E-mail address:}vincent.brugidou@univ-lille1.fr}
\end{center}

\bigskip

\begin{center}
$^{a}$\textit{Laboratoire Paul Painlev\'{e}, Universit\'{e} de Lille 1,
59655 Villeneuve d'Ascq cedex, France}

$^{b}$\textit{IUT A de Lille1, Bld Paul Langevin, BP 179, 59653 Villeneuve
d'Ascq cedex, France}
\end{center}

\bigskip

\textbf{Abstract: }We consider the power series in two complex variables $%
B_{y}\left( f^{\flat }\right) \left( x\right) =\sum_{n\geq 0}A_{n}^{\flat
}x^{n}y^{n\left( n+1\right) /2}$, where $\left( -1\right) ^{n}A_{n}^{\flat }$
are the non-zero coefficients of the Maclaurin series of the Riemann Xi
function. The Riemann hypothesis is the assertion that all zeros of $%
B_{1}\left( f^{\flat }\right) $ are real. We prove that every zero of $%
B_{y}\left( f^{\flat }\right) $ is the inverse of a power series in $y$ with
real coefficients, which converges for $\left| y\right| <0,2078...$ . We
show the existence of a constant $\Theta $, similar to the de Bruijn-Newman
constant, satisfying : $0\leq $ $y$ $\leq \Theta $ if and only if all zeros
of $B_{y}\left( f^{\flat }\right) $ are real. We prove that $1/4\leq \Theta
\leq 1$ and that $\Theta =1$ is equivalent to the Riemann hypothesis. We
show that the Riemann hypothesis is equivalent to what the discriminant of
each Jensen polynomial of $B_{y}\left( f^{\flat }\right) $ does not vanish
on the interval $\left[ 1/4,1\right[ $. We prove the Riemann hypothesis
implies that the zeros of $B_{y}\left( f^{\flat }\right) $ are simple for $%
0<y<1$, and we conjecture that the reciprocal implication is true.

\textit{keywords: }Riemann hypothesis, two complex variables, Laguerre
entire functions, de Bruijn-Newman constant, analytic continuation, simple
zeros.

\section{\protect\bigskip \textbf{Introduction}}

Following standard practice, we define the Riemann Xi function as 
\begin{equation}
\Xi \left( z\right) =\dfrac{1}{2}s\left( s-1\right) \pi ^{-s/2}\Gamma \left(
s/2\right) \zeta \left( s\right) \text{ \ ; \ \ \ }s=\dfrac{1}{2}+iz
\end{equation}
where $\zeta $ is the Riemann zeta function. It is well known ($\left[ 15%
\right] $ chap.1, 10) that $\Xi $ is an entire real function of order 1 and
that the Riemann hypothesis (abbreviated RH) is the assertion that all zeros
of $\Xi $\ are real. We have 
\begin{equation}
\Xi \left( z\right) =\sum\limits_{n=0}^{\infty }\left( -1\right)
^{n}A_{n}^{\flat }z^{2n}
\end{equation}
the coefficients $A_{n}^{\flat }$ being some strictly\ positive numbers ($%
\left[ 9\right] $, p.41). Introduce the auxiliary function

\begin{equation}
f^{\flat }\left( x\right) =\Xi \left( -i\sqrt{x}\right) =\sum_{n=0}^{\infty
}A_{n}^{\flat }x^{n}
\end{equation}
$f^{\flat }$ is an entire real function of order $1/2$ and factorizes
therefore, according to the Hadamard theorem, as

\begin{equation}
f^{\flat }\left( x\right) =A_{0}^{\flat }\prod\limits_{p\geq 1}^{\infty
}\left( 1-\dfrac{x}{x_{p}}\right)
\end{equation}
Thus, RH is also the assertion that all the zeros $x_{p}$ of $f^{\flat }$
are negative numbers (in fact, it suffices to show that the zeros $x_{p}$
are real since the coefficients $A_{n}^{\flat }$ are positive).

In\ $\left[ 3\right] $ a new method was introduced by the author, to
determine the value and the reality of zeros of an entire function, whose
all coefficients of its Maclaurin series are non-zero real numbers (this
method has been generalized in $\left[ 4\right] $ to the functions which may
be represented by a Laurent series convergent in $\mathbb{C}^{\ast }$). It
was therefore natural to apply this method to the function $f^{\flat }$ that
satisfies these characteristics ( note that this is why we introduce the
auxiliary function $f^{\flat }$, rather than using the function $\Xi $ that
does not chek these characteristics). This is the purpose of this article,
whose here is the plan and the main results. In Section 2, we recall the
principle of the method introduced in $\left[ 3\right] $, in defining a
functionnal transformation called $y$-Borel transform and denoted by $B_{y}$%
. By direct application of Theorem 16 of $\left[ 3\right] $, we show that
the inverse of zeros of $B_{y}\left( f^{\flat }\right) $ may be represented
by power series in $y$, convergent in a disk with center $0$ and radius
equal to $\rho _{o}^{2}=0,2078...$ where $\rho _{o}$ is the positive root of
the equation $\sum_{k=1}^{\infty }\rho ^{k^{2}}=1/2$ (\textbf{Theorem 2.2})
. In the following sections, we limit ourselves to study $B_{y}\left(
f^{\flat }\right) $ for $y\in \mathbb{R}_{+}$. In Section 3, we recall some
properties of the Laguerre entire functions, necessary for the rest of the
article. In Section 4, we define from $B_{y}\left( f^{\flat }\right) $ a
constant $\Theta $, whose \textbf{Theorem 4.3}\ shows that it plays a role
similar to the de Bruijn-Newman constant. In particular, the point $iv)$ of
Theorem 4.3 is a reformulation of RH. In Section 5, based on the results
already established, we get another reformulation of RH by using the theory
of analytic continuation along a path (\textbf{Theorem 5.1}). In Section 6,
we prove (\textbf{Corollary 6.2}) that the Riemann hypothesis implies that
the zeros of $B_{y}\left( f^{\flat }\right) $ are simple for $y\in \left] 0,1%
\right[ $ and we discuss the reciprocal implication.

\bigskip

\begin{remark}
It is easy to see that RH is equivalent to what $f^{\flat }$ is a Laguerre
entire function (see Section 3 below for the definition of these functions).
In fact, many of the necessary or sufficient conditions for the RH, that we
exhibit in this article, follow from conditions of which we will demonstrate
the necessity or the sufficiency, in order that an entire real function may
be a Laguerre entire function. So, our article can also be considered from
this point of view. We can say that our approach to the RH is an ''external
approach'', in the words of Balazard in Section 3 of $\left[ 1\right] $. The
theory of the Fourier transforms with real zeros (see $\left[ 1\right] $
Section 3.2 with, in particular, Polya's results), is a well known
''external approach'' of RH. These two ''external approaches'' present
similarities as we will see below in Section 4.3.
\end{remark}

\bigskip\ 

\section{\protect\bigskip Direct application of the method introduced in $%
\left[ 3\right] $}

\bigskip To present this method, it is convenient to introduce a functional
transformation:

\subsection{Formal $y$-Borel transform}

\begin{definition}
Let $\mathbb{K}$ be a field and $f\left( x\right) =\sum_{n\in \mathbb{N}}$ $%
A_{n}x^{n}$ $\in \mathbb{K}\left[ \left[ x\right] \right] $. We will call $y$%
-Borel transform of $f$, the formal power series in $x$ and $y$ 
\begin{equation}
B_{y}\left( f\right) \left( x\right) =Q\left( x,y\right) =\sum\limits_{n\in 
\mathbb{N}}A_{n}x^{n}y^{\tfrac{n\left( n+1\right) }{2}}
\end{equation}
\end{definition}

This definition is still valid when $y$ is a fixed element of $\mathbb{K}$, $%
B_{y}\left( f\right) $ is then a formal power series in $x$ depending on the
parameter $y$. Note that we have in particular $B_{1}\left( f\right) =f$.
When $\mathbb{K=C}$, these definitions and notations extend naturally to
cases of convergent power. In the following, we use the notation $%
B_{y}\left( f\right) $ in general, but it may happen that we use the
notation $Q\left( x,y\right) $ when we consider the $y$-Borel transform as a
function of two variables.

If we change variables $y=1/q$, $x=\xi q$ \ ( here $\xi $ is a simple
variable) in the equation $\left( 5\right) $, $B_{y}$ reduces to a $q$%
-analogue of the formal Borel transform, introduced by Ramis (see eg $\left[
16\right] $, Definition 4.4). The relationship is clearly between $B_{y}$
and the $q$-Borel transform as in Ramis assume $\left| q\right| \geq 1$,
whereas for us the interesting case is $\left| y\right| \leq 1$. Note
however, that our results are independant of the theory of $q$-difference
equations, for which the $q$-formal Borel transform was introduced by Ramis.

Note also that we have the following relation valid for $y\in \mathbb{K}$
and $y_{0}\in \mathbb{K}^{\ast }$ 
\begin{equation}
B_{y}\left( f\right) =B_{y/y_{0}}\left[ B_{y_{0}}\left( f\right) \right]
\end{equation}
and the following convergence properties valid for $\mathbb{K=C}$:

\begin{proposition}
If for $y_{0}\in \mathbb{R}_{+}^{\ast }$, $B_{y_{0}}\left( f\right) $ is an
entire function, then $B_{y}\left( f\right) $ is an entire function for $%
\left| y\right| \leq y_{0}$. Furthermore, when $y\rightarrow y_{0}$, $\
0\leq y<y_{0}$, $B_{y}\left( f\right) $ converges uniformly to $%
B_{y_{0}}\left( f\right) $ on every compact subsets of $\mathbb{C}$, and the
zeros of $B_{y_{0}}\left( f\right) $ are the limits of zeros of $B_{y}\left(
f\right) $.
\end{proposition}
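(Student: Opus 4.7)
The plan is to verify the three assertions in turn, using only basic facts about Maclaurin series with infinite radius of convergence, the Weierstrass $M$-test, and Hurwitz's theorem at the end.

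First I would unfold the hypothesis: saying that $B_{y_{0}}(f)(x)=\sum A_{n}y_{0}^{n(n+1)/2}x^{n}$ is entire is equivalent to $\limsup_{n}\bigl(|A_{n}|y_{0}^{n(n+1)/2}\bigr)^{1/n}=0$, which means that $\sum_{n}|A_{n}|y_{0}^{n(n+1)/2}R^{n}<\infty$ for every $R>0$. For any $y\in\mathbb{C}$ with $|y|\leq y_{0}$ and any $x$ with $|x|\leq R$, the elementary estimate $|y|^{n(n+1)/2}\leq y_{0}^{n(n+1)/2}$ yields
\begin{equation*}
\bigl|A_{n}y^{n(n+1)/2}x^{n}\bigr|\leq |A_{n}|y_{0}^{n(n+1)/2}R^{n},
\end{equation*}
so the Weierstrass $M$-test immediately gives the normal convergence of $B_{y}(f)$ on every disk $\{|x|\leq R\}$, proving that $B_{y}(f)$ is entire.

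For the uniform convergence as $y\to y_{0}^{-}$ on a fixed compact set $K\subset\{|x|\leq R\}$, I would use a standard tail/head splitting. Given $\varepsilon>0$, choose $N$ such that $2\sum_{n>N}|A_{n}|y_{0}^{n(n+1)/2}R^{n}<\varepsilon/2$; the dominated bound above controls the tail uniformly in $y\in[0,y_{0}]$. For the finite head, the continuous dependence $y\mapsto y^{n(n+1)/2}$ makes $\sum_{n\leq N}|A_{n}||y^{n(n+1)/2}-y_{0}^{n(n+1)/2}|R^{n}$ tend to $0$ as $y\to y_{0}$, which takes care of the remaining part and delivers uniform convergence of $B_{y}(f)\to B_{y_{0}}(f)$ on $K$.

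The statement on zeros then follows from Hurwitz's theorem applied to the sequence $B_{y_{n}}(f)\to B_{y_{0}}(f)$ for any $y_{n}\uparrow y_{0}$: if $x_{0}$ is a zero of $B_{y_{0}}(f)$ of order $m$, then on any sufficiently small disk around $x_{0}$ on which $B_{y_{0}}(f)$ has no other zero, each $B_{y_{n}}(f)$ has (for $n$ large) exactly $m$ zeros counted with multiplicity, so that $x_{0}$ is a limit of zeros of $B_{y}(f)$. I do not anticipate any genuine obstacle here; the only point requiring minor care is making sure that the dominating majorant used both for analyticity and for the $\varepsilon/2$ tail estimate is available uniformly in $y$ on the closed disk $\{|y|\leq y_{0}\}$, which is exactly what the monotonicity of $t\mapsto t^{n(n+1)/2}$ on $[0,y_{0}]$ provides.
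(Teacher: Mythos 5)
Your proof is correct and follows essentially the same route as the paper: a $y$-uniform majorant $|A_{n}|y_{0}^{n(n+1)/2}R^{n}$ giving normal convergence (hence entirety and uniform convergence on compacts as $y\to y_{0}$), followed by a Hurwitz-type argument for the zeros. The only cosmetic difference is that the paper first treats $y_{0}=1$ and reduces the general case via the relation $B_{y}(f)=B_{y/y_{0}}[B_{y_{0}}(f)]$, whereas you handle general $y_{0}$ directly; the estimates are the same.
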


\begin{proof}
\bigskip Suppose first that $y_{0}=1$. Then, it is a special case of Lemma
19 of $\left[ 3\right] $. So, we summarize the proof briefly. Let $R$\ be
any positive number, for $\left| x\right| \leq R$ and $y\in \overline{D}%
\left( 0,1\right) $ $=\left\{ y\in \mathbb{C}\text{ : }\left| y\right| \leq
1\right\} $, we have $\left| A_{n}x^{n}y^{n\left( n+1\right) /2}\right| \leq
\left| A_{n}\right| R^{n}$, then the power series $\sum A_{n}x^{n}y^{n\left(
n+1\right) /2}$ converges normally with respect to $\left| x\right| \leq R$
and $\left| y\right| \leq 1$. $Q\left( x,y\right) $ is therefore a
continuous function in $\mathbb{C}\times \overline{D}\left( 0,1\right) $. It
follows that $Q\left( x,y\right) $ is uniformly continuous on each $K\times 
\overline{D}\left( 0,1\right) $, where $K$ is an arbitrary compact of $%
\mathbb{C}$. We deduce that $B_{y}\left( f\right) $ is an entire function
for $\left| y\right| \leq 1$ and $B_{y}\left( f\right) $ converges to $%
B_{1}\left( f\right) =f$ uniformly with respect to $x$\ on each compact
subsets of $\mathbb{C}$, when $y\rightarrow 1$, $y$ remaining in $\overline{D%
}\left( 0,1\right) $. Then we prove that, similarly to the Hurwitz theorem,
the zeros of $f$ are the limits of zeros of $B_{y}\left( f\right) $ when $%
y\rightarrow 1$, $y$ remaining in $\overline{D}\left( 0,1\right) $. This is
a fortiori true when $y\longrightarrow 1$ along the radius $\left[ 0,1\right[
$.

For any $y_{0}\in \mathbb{R}_{+}^{\ast }$, we reduce to the previous case
using the relationship $\left( 6\right) $.
\end{proof}

\subsection{\protect\bigskip Summary of the method introduced in $\left[ 3%
\right] $}

Let $\mathbb{K}$ be a field of characteristic zero and $f\left( x\right)
=\sum_{n\geq 0}A_{n}x^{n}$ $\in $ $\mathbb{K}\left[ \left[ x\right] \right] $
with $A_{n}\neq 0$ for all $n$. By considering $B_{y}\left( f\right) $ as a
formal power series in $x$ over the field $\mathbb{K}\left( \left( y\right)
\right) $, it was shown (Theorem 1 of $\left[ 3\right] $) that the zeros of $%
B_{y}\left( f\right) $ are the elements of\ $\mathbb{K}\left( \left(
y\right) \right) $, given for $p\geq 1$ by 
\begin{equation}
x_{p}\left( y\right) =-\dfrac{1}{\alpha _{p}\left( y\right) }
\end{equation}
where $\alpha _{p}\left( y\right) \in \mathbb{K}\left[ \left[ y\right] %
\right] $ is given by 
\begin{equation}
\left\{ 
\begin{tabular}{ll}
\ \ $\alpha _{p}\left( y\right) $ & $=\sum\limits_{q\geq p}u_{p}\left(
q\right) y^{q}$ \\ 
and$\text{ \ \ \ \ \ \ }u_{p}\left( q\right) $ & $=v_{p}\left( q\right)
-v_{p+1}\left( q\right) $%
\end{tabular}
\right. 
\end{equation}
in particular 
\begin{equation}
u_{p}\left( p\right) =\dfrac{A_{p}}{A_{p-1}}.
\end{equation}

Further, Theorem 15 of $\left[ 3\right] $ shows that for $p\geq 1$, $q\geq p$

\begin{equation}
v_{p}\left( q\right) =\left[ x^{1}y^{q-\left( p-1\right) }\right]
LogQ_{p}\left( x,y\right)
\end{equation}
with

\begin{equation*}
Q_{p}\left( x,y\right) =\dfrac{y^{\left( p-2\right) \left( p-1\right) /2}}{%
A_{p-1}x^{p-1}}Q\left( x,y\right) .
\end{equation*}
In the equation $\left( 10\right) $, $LogQ_{p}$ denotes the extended
composition, as it was defined in Section 4 of $\left[ 3\right] $,\ of the
formal power series $Q_{p}\left( x,y\right) \in \mathbb{K}\left( \left(
x\right) \right) \left[ \left[ y\right] \right] $ by $Log\left( 1+z\right)
\in \mathbb{K}\left[ \left[ Z\right] \right] $. And we use the notation $%
\left[ x^{i}y^{j}\right] $ to denote the coefficient in $x^{i}y^{j}$ in the
2-variable power series which follows this symbol. We thus get $v_{p}\left(
q\right) $ (Section 5 of [3]), as a product of $A_{1}/A_{0}$ by a polynomial
expression in the ''variables'' $\Omega _{n}$ defined for $n\geq 1$, by 
\begin{equation*}
\;\Omega _{n}=\dfrac{A_{n-1}A_{n+1}}{A_{n}^{2}}\text{.}
\end{equation*}
Suppose now that $\mathbb{K=C}$. When $R>0$, we say that $f$ satisfies the
property $A\left( R\right) $ if for all real number $r$, $0\leq r<R$, we have

\begin{equation}
\text{\ \ }\sum\limits_{p\geq 1,q\geq p}v_{p}\left( q\right) r^{q}<+\infty 
\text{ .}
\end{equation}
We then put down $R^{\ast }=\sup \left\{ R>0\;/\text{ }A\left( R\right) 
\text{ is true}\right\} $ if this set is not empty, and $R^{\ast }=0$
otherwise.

It has been shown (Theorem 3 of\ $\left[ 3\right] )$ for $y\in \mathbb{C}$
such that $\left| y\right| <R^{\ast }$, that the power series $\alpha
_{p}\left( y\right) \ $converge as well as the infinite product 
\begin{equation}
B_{y}\left( f\right) \left( x\right) =A_{0}\prod\limits_{p\geq 1}\left(
1+\alpha _{p}\left( y\right) x\right) \text{.}
\end{equation}
The zeros of $B_{y}\left( f\right) $ are therefore exactly given by Equation 
$\left( 7\right) $ for $\left| y\right| <R^{\ast }$, $p$ taking all values
such that $\alpha _{p}\left( y\right) \neq 0$.

Using a method of majorant series, it was also shown ( Theorem 16 of $\left[
3\right] $) that if we put $\Omega =\sup_{n\geq 1}\left| \Omega _{n}\right| $%
, we have $R^{\ast }>\rho _{o}^{2}\Omega ^{-1}$where $\rho _{o}$ is the
positive root of the equation in $\rho $ 
\begin{equation}
\sum_{k=1}^{\infty }\rho ^{k^{2}}=1/2.
\end{equation}

In particular, if all coefficients $A_{n}$ are real and if $\Omega \leq \rho
_{o}^{2}=0,2078...$, all the zeros of the entire function $f$ are real
(corollary 18 of $\left[ 3\right] $).

Note that if $f$ is a polynomial of degree $n$ whose all coefficients are
different from zero, the method outlined above is fully applicable for the $%
n $ zeros $x_{p}\left( y\right) $, $1\leq p\leq n$, with the following
convention : $\Omega _{k}=0$ for $k\geq n$. In $\left[ 4\right] ,$ these
results were extended to the case of Laurent series $\sum_{n\in \mathbb{Z}%
}A_{n}x^{n}$, convergent on $\mathbb{C}^{\ast }$.

Finally, note that the factorization of $Q\left( x,y\right) $ in infinite
product and the formula $\left( 10\right) $ can be generalized to the case
where : 
\begin{equation*}
Q\left( x,y\right) =\sum_{n}A_{n}\left( y\right) x^{n}y^{n\left( n+1\right)
/2}
\end{equation*}

where $A_{n}\left( y\right) $ is now a series in $y$ with a valuation equal
to zero, $\left[ 5\right] $ . This latter result can be easily applied to
factorize a large number of $q$-series in an infinite product.

\subsection{Application to the function $f^{\flat }$}

As noted in the introduction, the coefficients $A_{n}^{\flat }$ of the
entire function $f^{\flat }$ are real and different from zero. We can
therefore apply the method described above. This gives the following new
theorem:

\begin{theorem}
\bigskip Let $\rho _{o}$ be the positive root of equation $\left( 13\right) $%
\ and $f^{\flat }$ be the entire function defined by equation $\left(
3\right) $. Then the zeros of the $y$-Borel transform of $f^{\flat }$can be
calculated explicitly by the formulas $\left( 7\right) ,\left( 8\right)
,\left( 10\right) $ for $0<\left| y\right| <\rho _{o}^{2}=0,2078...$ And
they are all real for $0<y\leq \rho _{o}^{2}$.
\end{theorem}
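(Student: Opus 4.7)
The plan is to invoke Theorems 3 and 16 of [3] directly. Since $f^{\flat}$ has all its Maclaurin coefficients $A_{n}^{\flat}$ real and strictly positive, these theorems are applicable, and the only quantitative input I need is an upper bound on $\Omega=\sup_{n\geq 1}|\Omega_{n}|$, where $\Omega_{n}=A_{n-1}^{\flat}A_{n+1}^{\flat}/(A_{n}^{\flat})^{2}$. Specifically, Theorem 16 of [3] gives $R^{*}>\rho_{o}^{2}\,\Omega^{-1}$, so the target is $\Omega\leq 1$: once this is known, $R^{*}>\rho_{o}^{2}$ and Theorem 3 of [3] guarantees that the formulas (7), (8), (10) determine all zeros of $B_{y}(f^{\flat})$ for $|y|<\rho_{o}^{2}$.

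The key step is the bound $\Omega\leq 1$, i.e.\ the Tur\'an-type inequality $(A_{n}^{\flat})^{2}\geq A_{n-1}^{\flat}A_{n+1}^{\flat}$ for every $n\geq 1$. This is precisely the classical first Tur\'an inequality for the Maclaurin coefficients of the Riemann Xi function, which I would cite from Csordas, Norfolk and Varga (1986). This is by far the deepest input and is the main obstacle: there is no simple elementary derivation, and one genuinely relies on integral-representation and positivity arguments for $\Xi$. Everything else is mechanical once the Tur\'an inequality is in place.

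For the reality assertion, I would argue as follows. By formula (10), together with the description in Section 5 of [3] of $v_{p}(q)$ as a polynomial expression in the ratios $\Omega_{n}$ multiplied by $A_{1}^{\flat}/A_{0}^{\flat}$, the power series $\alpha_{p}(y)$ has real coefficients whenever the $A_{n}^{\flat}$ do. Hence for real $y$ with $0<y<\rho_{o}^{2}$, each $\alpha_{p}(y)$ is real, and so is $x_{p}(y)=-1/\alpha_{p}(y)$ whenever defined. The endpoint $y=\rho_{o}^{2}$ is covered either directly (since $\Omega\leq 1$ yields the strict inequality $R^{*}>\rho_{o}^{2}$, so the formulas (7), (8), (10) in fact extend past $|y|=\rho_{o}^{2}$), or alternatively by continuity via Proposition 2.1 applied with $y_{0}=1$: the limits of real zeros as $y\nearrow\rho_{o}^{2}$ must remain real, because non-real zeros of an entire function with real Maclaurin coefficients occur in complex conjugate pairs and therefore cannot arise from a single real zero without first producing a collision on the real line.
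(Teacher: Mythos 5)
Your proposal follows the paper's proof essentially verbatim: the Tur\'an inequality from Csordas--Norfolk--Varga gives $\Omega \leq 1$ for $f^{\flat }$, Theorem 16 (with Theorem 3) of $\left[ 3\right] $ then yields $R^{\ast }\geq \rho _{o}^{2}$ and the validity of formulas $\left( 7\right) ,\left( 8\right) ,\left( 10\right) $ for $\left| y\right| <\rho _{o}^{2}$, and the reality of the coefficients $v_{p}\left( q\right) $, hence of $\alpha _{p}\left( y\right) $, gives reality of the zeros for real $y<\rho _{o}^{2}$. The only divergence is at the endpoint $y=\rho _{o}^{2}$: the paper uses the non-strict bound $R^{\ast }\geq \rho _{o}^{2}$ (so your first option, which needs $R^{\ast }>\rho _{o}^{2}$, is not what it relies on) and instead applies Proposition 2.1 at $y_{0}=\rho _{o}^{2}$ (limits of real zeros are real), which is the correct form of your second alternative --- your invocation ``with $y_{0}=1$'' should read $y_{0}=\rho _{o}^{2}$.
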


\begin{proof}
\bigskip In $\left[ 8\right] $ (Csordas and al,1986), it was proved for $%
n\geq 1$, the double inequality 
\begin{equation*}
1\leq \dfrac{b_{n-1}b_{n+1}}{b_{n}^{2}}\leq \dfrac{2n+1}{2n-1}
\end{equation*}
where the coefficients $b_{n}$ are related to our coefficients $A_{n}^{\flat
}$ by $b_{n}/(2n)!=A_{n}^{\flat }/2^{n+3}$. It follows that 
\begin{equation}
\left( \dfrac{2n-1}{2n+1}\right) \dfrac{n}{n+1}\leq \Omega _{n}^{\flat }=%
\dfrac{A_{n-1}^{\flat }A_{n+1}^{\flat }}{\left( A_{n}^{\flat }\right) ^{2}}%
\leq \dfrac{n}{n+1}
\end{equation}
(the inequality on the right side of $\left( 14\right) $ is sometime called
the Turan inequality). Proof is now a direct application of Theorem 16 of $%
\left[ 3\right] $ to the function $f^{\flat }$ with $\Omega =\sup_{n\geq
1}\left| \Omega _{n}^{\flat }\right| =1$ and gives $R^{\ast }\geq \rho
_{o}^{2}$. So, for $\left| y\right| <$ $\rho _{o}^{2}$, the power series $%
\alpha _{p}\left( y\right) $ converge for all $p\geq 1$, and we get all the
zeros of $B_{y}\left( f^{\flat }\right) $ with $\left( 7\right) $ by taking
all values of $p\geq 1$ such as $\alpha _{p}\left( y\right) \neq 0$.
Further, as the coefficients $A_{n}^{\flat }$ are real numbers, it is the
same for $v_{p}\left( q\right) $ then for $\alpha _{p}\left( y\right) $, and
therefore for all the zeros of $B_{y}\left( f^{\flat }\right) $ when $y\in %
\left[ 0,\rho _{o}^{2}\right[ $. It follows from proposition 2.1\ that the
zeros of $B_{y}\left( f^{\flat }\right) $ are also real numbers when $y=\rho
_{o}^{2}$
\end{proof}

Assume now we can show that $R^{\ast }\geq 1$ for $f^{\flat }$. Then the
results of Theorem 2.2 extend to the disk $\left| y\right| <1$.
Specifically, Theorem 16 of $\left[ 3\right] $ shows that all series $\alpha
_{p}\left( y\right) $ converge and give by the equation $\left( 7\right) $,\
the zeros of $B_{y}\left( f^{\flat }\right) $ for $\left| y\right| <1$. We
then show as above that all zeros of $B_{y}\left( f^{\flat }\right) $ are
real for $y\in \left[ 0,1\right] $, and, in particular, that RH is true.
Hence, we just prove the implication : 
\begin{equation}
R^{\ast }\geq 1\text{ for the function }f^{\flat }\Longrightarrow RH\text{
is true}
\end{equation}
Note that to prove the condition $R^{\ast }\geq 1$, we cannot use Theorem 16
of $\left[ 3\right] $ for which the lower bound $R^{\ast }\geq \rho _{o}^{2}$
is optimum. So we have to seek a majoration of $v_{p}\left( q\right) $
different from that used in this theorem, and probably specific to the
function $f^{\flat }$. This raises difficult technical problems. So we
cannot, at present, increase the convergence radius of the power series $%
\alpha _{p}\left( y\right) $. That's why we have limited ourselves in the
rest of the article, to study the reality of the zeros of $B_{y}\left(
f^{\flat }\right) $ when $y$ increases along the real segment $\left[ \rho
_{o}^{2},1\right] $, in renouncing a priori to the convergence of the power
series $\alpha _{p}\left( y\right) $ in the disk $\left| y\right| <1$.

\section{Background on the Laguerre entire functions}

\QTP{Body Math}
\bigskip For the reader's convenience, we recall in this section the
definitions and some known properties of these functions (see for example $%
\left[ 12\right] $ or $\left[ 7\right] $).

\begin{definition}
$i)$ We say that an entire function $f$\ is of the first type of Laguerre
(abbreviated $L_{1}$) if it can be written 
\begin{equation*}
f\left( x\right) =Cx^{m}e^{\sigma x}\prod\limits_{p\geq 1}\left(
1+a_{p}x\right) 
\end{equation*}
with $C,\sigma ,a_{p}$ $\geq 0$, $m$ $\in \mathbb{N}$, and $\sum_{p\geq
1}a_{p}<+\infty $

\bigskip $ii)$ We say that an entire function $f$ is of the second type of
Laguerre (abbreviated $L_{2}$) if it can be written 
\begin{equation*}
f\left( x\right) =Cx^{m}e^{-ax^{2}+bx}\prod\limits_{p\geq 1}\left[ \left(
1+a_{p}x\right) e^{-a_{p}x}\right] 
\end{equation*}
with $C,b,a_{p}\in \mathbb{R}$, $a\geq 0$, $m\in \mathbb{N}$, and $%
\sum_{p\geq 1}a_{p}^{2}<+\infty $
\end{definition}

\bigskip Note that in $\left[ 7\right] $, the Laguerre entire functions are
called the Laguerre-Polya class.

Let us give two others definition that will be useful

\begin{definition}
\bigskip Let $f\left( x\right) =\sum_{n\geq 0}A_{n}x^{n}$be a formal power
series on $\mathbb{C}$, finite or infinite. For $n\in \mathbb{N}$, the $n$%
-th Jensen polynomial of $f$ is the polynomial (we follow the definition
given in $\left[ 7\right] $ Section 3) 
\begin{equation}
J_{n}\left( x\right) =\sum\limits_{k=0}^{n}\dfrac{n!}{\left( n-k\right) !}%
A_{k}x^{k}\text{ .}
\end{equation}
\end{definition}

\begin{definition}
We say that a formal power series $f\left( x\right) =$ $\sum_{n\geq
0}A_{n}x^{n}$ is without gaps, if it can be written 
\begin{equation*}
f\left( x\right) =\sum\limits_{n=n_{1}}^{n_{2}}A_{n}x^{n}\text{ with }%
A_{n}\neq 0\text{ \ \ for }0\leq n_{1}\leq n\leq n_{2}\leq +\infty \text{ .}
\end{equation*}
For any formal power series without gaps, we put $\Omega
_{n}=A_{n-1}A_{n+1}/A_{n}^{2}$ for $n_{1}\leq n\leq n_{2}$ and \ $\Omega
_{n}=0$ \ otherwise.
\end{definition}

\bigskip Here are the known properties of Laguerre entire functions we need:

\begin{proposition}
-If $f$ is $L_{1}$, then it is $L_{2}$.

\bigskip - If $f$\ is $L_{2}$ and if all the coefficients of its MacLaurin
series are positive, then it is $L_{1}$.

- If $f\left( x\right) =\sum_{n\geq 0}A_{n}x^{n}$ is $L_{1}$, then it is
without gaps and we have the inequality (of Turan) 
\begin{equation}
\text{\ \ \ \ }\Omega _{n}\leq \dfrac{n}{n+1}\text{ for all }n\geq 1\text{ .}
\end{equation}

- \bigskip The set of functions $L_{1}$ (respectively $L_{2}$) is closed for
the topology of uniform convergence on compact subsets of $\mathbb{C}$, and
stable by derivation.
\end{proposition}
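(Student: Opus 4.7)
The plan is to derive all four items from the canonical Hadamard factorization of entire functions of finite order together with the classical Laguerre--P\'olya theory, essentially following the references $\left[ 7\right] $ and $\left[ 12\right] $.

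For item $i)$, I would rewrite the $L_1$ form by inserting and removing convergence factors: $\prod_{p\geq 1}(1+a_px)=\exp\bigl(\sum_{p\geq 1}a_px\bigr)\prod_{p\geq 1}(1+a_px)e^{-a_px}$, where the exponent is convergent because $\sum a_p<\infty $. Since $a_p\geq 0$ and $a_p\rightarrow 0$, we also have $\sum a_p^2<\infty $, so absorbing the new exponential into $e^{\sigma x}$ yields an $L_2$ form with $a=0$ and $b=\sigma +\sum a_p$.

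For item $ii)$, positivity of the Maclaurin coefficients forces $f$ to have no positive real zero, hence in the $L_2$ factorization every $a_p\geq 0$; the same positivity rules out the Gaussian factor, since $a>0$ would eventually make the coefficients oscillate in sign. It remains to upgrade $\sum a_p^2<\infty $ to $\sum a_p<\infty $; this is where the hypothesis really bites, and where I would invoke the Laguerre--P\'olya characterization in $\left[ 7\right] $, namely that an entire function of genus at most $1$ with only negative real zeros and strictly positive Maclaurin coefficients must be of genus $0$. This is the main obstacle: it rests on a growth argument comparing the order of $f$ on the positive real axis with the convergence exponent of its zero sequence.

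For item $iii)$, the ``without gaps'' property reduces to the observation that the Maclaurin coefficients of $f/(Cx^m)$ are sums of products of $\sigma ^k/k!$ and elementary symmetric polynomials $e_k(a_1,a_2,\ldots )$ of nonnegative reals; once a single coefficient vanishes, all the $a_p$ and $\sigma $ on the relevant side must vanish, contradicting the nontriviality. The Tur\'an inequality $\Omega _n\leq n/(n+1)$ is Newton's inequality for elementary symmetric functions of nonnegative reals applied to a polynomial of degree $N$, which yields the bound $n(N-n)/((n+1)(N-n+1))$ and tends to $n/(n+1)$ as $N\rightarrow \infty $; the extension from polynomials to $L_1$ follows by passing to the limit of the partial products, and one checks that multiplication by $e^{\sigma x}$, which itself satisfies the Tur\'an inequality with equality in the limit, preserves it.

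For item $iv)$, closure under uniform convergence on compact subsets follows from Hurwitz's theorem (limits of zeros are zeros, so reality is preserved), combined with the standard growth estimates ensuring that the order stays $\leq 1$ for $L_1$ and $\leq 2$ for $L_2$. Stability under differentiation comes for $L_1$ from Rolle's theorem (between consecutive negative zeros of $f$ lies a negative zero of $f'$, and the $e^{\sigma x}$ factor is preserved by differentiation) together with control of the leading exponential growth, and for $L_2$ by approximation: polynomials with only real zeros are dense in $L_2$, and the closure property just established propagates the $L_2$ character to derivatives.
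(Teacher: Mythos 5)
The paper does not actually prove this proposition: it is quoted as background from Iliev $\left[ 12\right] $ and Craven--Csordas $\left[ 7\right] $, so you are not competing with an in-paper argument, and deferring a genuinely hard point (upgrading $\sum a_{p}^{2}<\infty $ to $\sum a_{p}<\infty $ in the second item) to those references is consistent with what the paper itself does. Your sketches of the first and third items are essentially the classical proofs and are sound, up to a small slip: a vanishing Maclaurin coefficient of $f/(Cx^{m})$ does not produce a contradiction; since each summand $\frac{\sigma ^{j}}{j!}e_{n-j}(a)$ is nonnegative, it forces $\sigma =0$ and $e_{n}(a)=0$, hence only finitely many nonzero $a_{p}$ and the vanishing of all later coefficients as well --- which is exactly the no-gaps conclusion, not an absurdity.

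The genuine gap is in the fourth item. Hurwitz's theorem only controls the zeros of the limit, not the exponential factor, and there is no ``standard growth estimate ensuring the order stays $\leq 2$'' under locally uniform convergence: Taylor partial sums show that polynomials (order $0$) can converge locally uniformly to an entire function of arbitrary, even infinite, order. Excluding limits such as $e^{x^{3}}$ (whose zeros are all real, vacuously) is precisely the content of the Laguerre--P\'{o}lya closure theorem, and its proof requires more than you supply --- for instance uniform control of $\sum 1/x_{k}$ and $\sum 1/x_{k}^{2}$ for the real-rooted approximants, extracted from convergence of the low-order Maclaurin coefficients, or the Jensen-polynomial characterization (Theorem 3.3 of the paper). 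A secondary weak point: in the second item, dismissing the Gaussian factor because ``$a>0$ would make the coefficients oscillate in sign'' is an assertion, not an argument; the standard repair is short --- nonnegative coefficients give $\max_{\left| z\right| =r}\left| f\left( z\right) \right| =f\left( r\right) $, while $a>0$ and $a_{p}\geq 0$ give $f\left( r\right) \leq Cr^{m}e^{br-ar^{2}}\rightarrow 0$, contradicting $f\left( r\right) \geq A_{m}r^{m}$. Once the closure statement is either proved properly or cited from $\left[ 7\right] $ or $\left[ 12\right] $, the rest of your outline (Rolle/Gauss--Lucas for derivatives of real-rooted polynomials plus closure for stability under differentiation, and Newton's inequalities plus a passage to the limit for the Tur\'{a}n bound) does go through.
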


\begin{theorem}
\bigskip (Laguerre) let $f\left( x\right) =\sum_{n\geq 0}A_{n}x^{n}$ be $%
L_{2}$ and let $\varphi $ be $L_{2}$ without positive zeros, then the
function $\sum_{n\geq 0}A_{n}\varphi \left( n\right) x^{n}$ is $L_{2}$.
\end{theorem}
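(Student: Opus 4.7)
The approach is a classical three-stage reduction to a single linear case. First, I would approximate $\varphi \in L_{2}$ (without positive zeros) by a sequence of polynomials $\varphi _{N}\in L_{2}$, also without positive zeros, built by truncating the Hadamard factorization of $\varphi $ and approximating the exponential factors $e^{bx}$ and $e^{-ax^{2}}$ by polynomial approximants whose real zeros all lie in $\left( -\infty ,0\right] $ (on the compact set of interest). Proposition 3.3 gives the closure of $L_{2}$ under locally uniform convergence, and one verifies (using the order-at-most-$2$ growth of $\varphi $) that $T_{\varphi _{N}}\left( f\right) \rightarrow T_{\varphi }\left( f\right) $ locally uniformly, so it suffices to prove the theorem when $\varphi $ is a polynomial.

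Second, writing a polynomial $\varphi \in L_{2}$ without positive zeros as $\varphi \left( x\right) =C\prod_{j=1}^{N}\left( x+a_{j}\right) $ with $a_{j}\geq 0$, I would use that the operator $T_{\varphi }$ is multiplicative in $\varphi $--indeed $T_{\varphi \psi }\left( f\right) =T_{\varphi }\left( T_{\psi }\left( f\right) \right) $ since the coefficient of $x^{n}$ on both sides is $A_{n}\varphi \left( n\right) \psi \left( n\right) $--to reduce by induction on $N$ to the linear case $\varphi \left( x\right) =x+a$ with $a\geq 0$ (the constant case being trivial).

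Third, for $\varphi \left( x\right) =x+a$, the operator reads $T_{\varphi }\left( f\right) \left( x\right) =xf^{\prime }\left( x\right) +af\left( x\right) $. When $a=0$, this lies in $L_{2}$ immediately: $f^{\prime }$ is in $L_{2}$ by derivation-stability, and the product of two $L_{2}$ functions is $L_{2}$ (from the multiplicativity of Hadamard forms), so $xf^{\prime }\left( x\right) \in L_{2}$. For $a>0$, I would first approximate $f\in L_{2}$ by polynomials $P_{M}\in L_{2}$ obtained from partial products of its Hadamard factorization, reducing to showing that $Q\left( x\right) :=xP^{\prime }\left( x\right) +aP\left( x\right) $ is hyperbolic whenever $P$ is a hyperbolic polynomial of degree $n$ with simple real zeros $r_{1}<\cdots <r_{n}$ (multiple zeros being handled by a perturbation argument). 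To this end I would introduce the real-variable auxiliary functions $F\left( x\right) :=x^{a}P\left( x\right) $ on $\left( 0,+\infty \right) $ and $G\left( x\right) :=\left( -x\right) ^{a}P\left( x\right) $ on $\left( -\infty ,0\right) $, for which a direct computation gives $F^{\prime }\left( x\right) =x^{a-1}Q\left( x\right) $ and $G^{\prime }\left( x\right) =-\left( -x\right) ^{a-1}Q\left( x\right) $. Applying Rolle's theorem between consecutive zeros of $F$ on $\left[ 0,+\infty \right) $ (noting $F\left( 0^{+}\right) =0$ when $a>0$) and analogously for $G$ on $\left( -\infty ,0\right] $ produces at least $n$ real zeros of $Q$, matching its degree, so all zeros of $Q$ are real. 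The main obstacle is precisely this Rolle-type count, which must separate cases according to the signs of the $r_{j}$ and the boundary behavior at $x=0$; a secondary delicacy is ensuring in Step 1 that the polynomial approximants $\varphi _{N}$ can be chosen without positive real zeros, particularly for the $e^{-ax^{2}}$ factor, whose natural approximants $\left( 1-ax^{2}/N\right) ^{N}$ introduce positive zeros that must be pushed arbitrarily far out relative to the compact set being considered.
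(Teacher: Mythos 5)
The paper itself offers no proof of this statement: it is quoted as classical background (Laguerre's theorem) with a pointer to $\left[ 7\right] $ and $\left[ 12\right] $, so your attempt has to be judged on its own. Your Steps 2 and 3 are correct and are essentially Laguerre's original argument: the diagonal operator $T_{\varphi }$ is multiplicative in $\varphi $, so a polynomial $\varphi \left( x\right) =C\prod \left( x+a_{j}\right) $ with $a_{j}\geq 0$ reduces to the single factor $x+a$; for a hyperbolic polynomial $P$ (after factoring out $x^{m}$ if $P\left( 0\right) =0$) the weighted Rolle count with $F\left( x\right) =x^{a}P\left( x\right) $ on $\left( 0,+\infty \right) $ and $G\left( x\right) =\left( -x\right) ^{a}P\left( x\right) $ on $\left( -\infty ,0\right) $ does produce $n$ real zeros of $xP^{\prime }+aP$, matching its degree; and passing to entire $f\in L_{2}$ by hyperbolic polynomial approximation plus the closure of $L_{2}$ under locally uniform convergence is legitimate.

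The genuine gap is Step 1. An $L_{2}$ function without positive zeros has the form $Cx^{m}e^{-ax^{2}+bx}\prod \left( 1+a_{p}x\right) e^{-a_{p}x}$ with $a\geq 0$, $a_{p}\geq 0$; when $a>0$ it is \emph{not} $L_{1}$, and since a locally uniform limit of real polynomials with only non-positive zeros is necessarily $L_{1}$ (the closure property you yourself invoke), no approximants $\varphi _{N}$ of the kind your Step 2 requires exist: the Gaussian factor forces positive zeros into every hyperbolic polynomial approximation. ``Pushing those zeros far out relative to the compact set'' does not repair this, for two reasons. First, your Steps 2--3 prove nothing about factors $\left( x-r\right) $ with $r>0$; handling them requires the stronger finite form of Laguerre's theorem in which the multiplier polynomial may have zeros in $\left[ \deg P,+\infty \right) $, proved by a different Rolle count (weight $x^{-r}$, using the behaviour at $+\infty $ rather than at $0$), combined with a double truncation in $f$ and in $\varphi $. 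Second, the relevant evaluation points are all integers $n\in \mathbb{N}$, not a compact set: to pass to the limit in $\sum A_{n}\varphi _{N}\left( n\right) x^{n}$ you need a bound on $\left| \varphi _{N}\left( n\right) \right| $ uniform in $N$ and valid for every $n$, and $\left( 1-an^{2}/N\right) ^{N}$ only admits the useless bound $e^{an^{2}}$. (Relatedly, ``order at most $2$'' is not what makes $\sum A_{n}\varphi \left( n\right) x^{n}$ entire; what one actually uses is $\left| \varphi \left( n\right) \right| \leq Cn^{m}e^{bn}$, which follows from $a\geq 0$, $a_{p}\geq 0$ via $\left( 1+a_{p}n\right) e^{-a_{p}n}\leq 1$.) Note finally that the case your reduction misses, $\varphi \left( x\right) =e^{\frac{\ln y}{2}\left( x^{2}+x\right) }$ with $0<y<1$, is exactly the one the paper needs in Theorem 4.1, and you cannot plug that hole by citing Theorem 4.1, which is deduced from the present theorem.
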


\begin{theorem}
\bigskip Let $f\left( x\right) =\sum_{n\geq 0}A_{n}x^{n}$ be an entire
function with coefficient in $\mathbb{C}$, such that $f\left( 0\right) \neq 0
$, then (Jensen)

$-$ $f$ is $L_{1}$ if and only if its Jensen polynomials have only negative
\ zeros.

$-$ $f$ is $L_{2}$ if and only if its Jensen polynomials have only real\
zeros.

Moreover (Polya, $\left[ 14\right] $) if $f$ is $L_{2}$ and not of the form $%
P\left( x\right) e^{\alpha x}$, where $P$ is a polynomial and $\alpha $ a
real number different from zero, its Jensen polynomials have only \textbf{%
simple} real zeros.\bigskip 
\end{theorem}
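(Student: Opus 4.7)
The theorem is classical: the two equivalences with $L_{1}$ and $L_{2}$ are due to Jensen, and the simple-zero refinement to Polya. My plan proceeds in three stages, using Hermite--Poulain and the stability/closure statements of Proposition~3.1 as the main ingredients.

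\textbf{Sufficiency.} The key observation is that the rescaled Jensen polynomials
\[
J_{n}(x/n)=\sum_{k=0}^{n}\Bigl(\prod_{j=0}^{k-1}\bigl(1-\tfrac{j}{n}\bigr)\Bigr) A_{k}\, x^{k}
\]
converge to $f(x)$ uniformly on every compact subset of $\mathbb{C}$, since each factor tends to $1$ for fixed $k$ and the tails are controlled by $\sum|A_{k}|R^{k}<\infty$. If every $J_{n}$ has only real (resp.\ negative real) zeros, the same holds for $J_{n}(x/n)$, which is therefore a polynomial in $L_{2}$ (resp.\ $L_{1}$). The closure of these classes under uniform convergence on compact sets (Proposition~3.1) then yields $f\in L_{2}$ (resp.\ $L_{1}$).

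\textbf{Necessity.} I first reduce to the polynomial case. Truncating the Hadamard product in the definition of $L_{2}$ and replacing the transcendental factors $e^{bx}$, $e^{-ax^{2}}$ by $(1+bx/m)^{m}$, $(1-ax^{2}/m)^{m}$ exhibits every $f\in L_{2}$ (resp.\ $L_{1}$) as a uniform limit on compacts of polynomials $P_{m}$ with only real zeros (only negative in the $L_{1}$ case). Uniform convergence entails convergence of every Taylor coefficient, so for each fixed $N$ the Jensen polynomials $J_{N}^{P_{m}}$ converge to $J_{N}^{f}$ in degree $\leq N$, and since the set of polynomials of degree $\leq N$ with only real (resp.\ negative) zeros is closed under coefficient convergence, it suffices to treat the polynomial case. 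For $P=\sum_{k=0}^{M}A_{k}x^{k}$ with only real zeros and $A_{0}\neq 0$, I invoke the Hermite--Poulain theorem (the elementary consequence of Rolle that $h(D)p$ has only real zeros whenever $h,p$ do) with $h=P$ and $p=x^{N}$:
\[
P(D)\, x^{N}=\sum_{k=0}^{\min(M,N)} A_{k}\,\frac{N!}{(N-k)!}\, x^{N-k}=x^{N}\, J_{N}^{P}(1/x).
\]
The nonzero zeros of the left-hand side are exactly the reciprocals of the zeros of $J_{N}^{P}$, and since they are all real (by Hermite--Poulain) and $J_{N}^{P}(0)=A_{0}\neq 0$, every zero of $J_{N}^{P}$ is real. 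In the $L_{1}$ case, positivity of the coefficients of $P$ propagates to positivity of the coefficients of $P(D)x^{N}$, forcing non-positive real zeros for $P(D)x^{N}$ and hence strictly negative ones for $J_{N}^{P}$.

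\textbf{Polya's simple-zero refinement.} Term-by-term differentiation yields the identity $(J_{n}^{f})'(x)=n\, J_{n-1}^{f'}(x)$, so a multiple zero $x_{0}$ of $J_{n}^{f}$ forces the common vanishing of $J_{n}^{f}(x_{0})$ and $J_{n-1}^{f'}(x_{0})$. Since $f'$ remains $L_{2}$ by the differentiation stability in Proposition~3.1, this coincidence iterates through the tower $f,f',f'',\ldots$; combined with the explicit Hadamard factorisation of $L_{2}$ functions the accumulated rigidity forces $f$ into the exceptional shape $P(x)e^{\alpha x}$. I expect this last stage to be the main obstacle: the first two parts reduce cleanly to Hermite--Poulain plus a limit argument, whereas Polya's refinement requires the subtler analysis of $\left[ 14\right] $, essentially the characterization of when the discriminants of the $J_{n}$ can vanish within the Laguerre--Polya class.
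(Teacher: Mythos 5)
First, note that the paper does not prove this statement at all: it is recalled in Section 3 as classical background (Jensen; Polya $\left[ 14\right] $), with references to $\left[ 7\right] $ and $\left[ 12\right] $, so there is no internal proof to compare against and any complete argument you give is already more than the paper supplies. Your treatment of the two Jensen equivalences is essentially the standard one and is sound: the sufficiency via $J_{n}\left( x/n\right) \rightarrow f$ uniformly on compacts together with the closure of $L_{1}$ and $L_{2}$ under such limits (Proposition 3.1, using $f\left( 0\right) =A_{0}\neq 0$ to exclude the zero limit), and the necessity via approximation of an $L_{2}$ (resp. $L_{1}$) function by real polynomials with only real (resp. negative) zeros, coefficientwise convergence of the fixed-degree Jensen polynomials, Hurwitz, and Hermite--Poulain applied to $P\left( D\right) x^{N}=x^{N}J_{N}^{P}\left( 1/x\right) $. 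The only caveats are cosmetic and inherited from the statement itself: one needs the reality (and, for $L_{1}$, the sign normalization) of the coefficients to pass from ``only real zeros'' of the approximating polynomials to actual membership in $L_{2}$ or $L_{1}$, and one should say explicitly that zeros escaping to infinity when the degree drops cause no harm since the constant term stays equal to $A_{0}\neq 0$.

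The genuine gap is in the third assertion (Polya's simple-zero refinement), and you acknowledge it yourself. The identity $\left( J_{n}^{f}\right) ^{\prime }\left( x\right) =n\,J_{n-1}^{f^{\prime }}\left( x\right) $ is correct, but the ``iteration through the tower $f,f^{\prime },f^{\prime \prime },\ldots $'' does not follow from it: a double zero $x_{0}$ of $J_{n}^{f}$ only gives $J_{n}^{f}\left( x_{0}\right) =J_{n-1}^{f^{\prime }}\left( x_{0}\right) =0$, i.e. a \emph{simple} coincidence of values, not a multiple zero of $J_{n-1}^{f^{\prime }}$, so no inductive descent is available, and no mechanism is exhibited that converts this ``accumulated rigidity'' into the conclusion that $f$ must have the exceptional form $P\left( x\right) e^{\alpha x}$. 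As written, this part is a statement of intent rather than a proof; to close it one needs Polya's actual argument (for instance via the generating identity $\sum_{n\geq 0}J_{n}\left( x\right) t^{n}/n!=e^{t}f\left( xt\right) $ and the strict interlacing of zeros of consecutive Jensen polynomials of a non-exceptional $L_{2}$ function), or an explicit citation of $\left[ 14\right] $ or $\left[ 7\right] $ for that clause, which is in effect what the paper itself does.
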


\begin{remark}
\bigskip We chek with $\left( 4\right) $ and Definition 2, that RH is
equivalent to what $f^{\flat }$is $L_{1}$, as announced in Remark 1.
Conversely, if $f$ is $L_{1}$, Proposition 3.1 shows that $f$ has no gaps
and satisfies the inequality $\left( 17\right) $. Hence, we can apply to $f$
the method introduced in $\left[ 3\right] $ and summarized above. Futher,
the inequality $\left( 17\right) $ shows that $\Omega \leq 1$ for $f$,
therefore Theorem 2.2 applies exactly to $f$.
\end{remark}

\bigskip

\section{An analogue of the De Bruijn-Newman\textbf{\ constant}}

\bigskip

\subsection{\protect\bigskip Some additional properties of the $y$-Borel
transform}

We establish a principle of contraction, we need to prove the following
theorems in the article:

\begin{theorem}
\bigskip \bigskip Let $f\left( x\right) =\sum_{n\geq 0}A_{n}x^{n}$ be an
entire function with real coefficients $A_{n}$.

$i)$ If $f$ is $L_{2}$, then $B_{y}\left( f\right) $ is also $L_{2}$ for $%
y\in \left[ 0,1\right] $.

$ii)$ If there is $y_{0}$ $\in \left] 0,1\right[ $ such that $%
B_{y_{0}}\left( f\right) $ has only real zeros, then $B_{y}\left( f\right) $
is $L_{2}$ for $y\in \left[ 0,y_{0}\right] $.
\end{theorem}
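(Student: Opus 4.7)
For part $i)$, I would invoke Laguerre's theorem (Theorem 3.3) with the multiplier $\varphi(z) = y^{z(z+1)/2}$, regarded as an entire function of the complex variable $z$. For $y\in(0,1]$, setting $\lambda := -\log y \geq 0$ gives
\[
\varphi(z) = e^{-\lambda z(z+1)/2} = e^{-(\lambda/2)z^2 - (\lambda/2)z},
\]
which has the form $Ce^{-az^2+bz}$ with $a = \lambda/2 \geq 0$ and admits no zeros at all (the product in Definition 3.1 being empty), so $\varphi\in L_2$ and in particular has no positive zeros. Since $\varphi(n) = y^{n(n+1)/2}$, Theorem 3.3 applied to the $L_2$ function $f$ yields that $B_y(f)(x) = \sum_n A_n\varphi(n)x^n$ is $L_2$. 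The case $y=0$ is trivial since $B_0(f) = A_0$ is constant.

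For part $ii)$, my plan is to exploit the composition relation $(6)$, namely $B_y(f) = B_{y/y_0}\bigl(B_{y_0}(f)\bigr)$. This reduces the statement to checking that $B_{y_0}(f)\in L_2$, for then applying part $i)$ to $B_{y_0}(f)$ with parameter $y/y_0\in[0,1]$ concludes. To show $B_{y_0}(f)\in L_2$, I would appeal to Hadamard's factorization. The function $B_{y_0}(f)$ is entire (since $|A_n|^{1/n}\to 0$ forces $|A_n y_0^{n(n+1)/2}|^{1/n}\to 0$), has real coefficients, and has only real zeros by hypothesis. Moreover, its coefficients $b_n = A_n y_0^{n(n+1)/2}$ satisfy
\[
-\log|b_n| \geq \tfrac{n(n+1)}{2}\log(1/y_0) - \bigl|\log|A_n|\bigr|,
\]
and since $|A_n|^{1/n}\to 0$ the subtracted term is $o(n^2)$; hence $-\log|b_n|$ grows at least quadratically, and the classical order formula $\rho = \limsup(n\log n)/(-\log|b_n|)$ yields order $\rho=0$. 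Hadamard's theorem therefore forces
\[
B_{y_0}(f)(x) = Cx^m\prod_k(1+a_kx), \qquad a_k\in\mathbb{R},\ \sum_k|a_k|<\infty,
\]
which rewrites in the form $Cx^m e^{bx}\prod_k\bigl[(1+a_kx)e^{-a_kx}\bigr]$ of Definition 3.1$(ii)$ with $a=0$ and $b=\sum_k a_k$, so $B_{y_0}(f)\in L_2$.

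The main obstacle in this plan is the order-zero verification in part $ii)$: it is this estimate that allows the passage from ``entire with all real zeros'' to the Laguerre--P\'olya class, a step that is not automatic for general real entire functions with real zeros. Part $i)$ is essentially immediate once the $L_2$ nature of the multiplier $\varphi(z) = y^{z(z+1)/2}$ is recognized, and the composition law $(6)$ does the rest of the work in $ii)$.
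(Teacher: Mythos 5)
Your argument is essentially the paper's own proof: part $i)$ applies Laguerre's theorem (the paper's Theorem 3.2, not 3.3) with the same multiplier $\varphi \left( z\right) =y^{z\left( z+1\right) /2}$, and part $ii)$ combines the order-zero property of $B_{y_{0}}\left( f\right) $ (the content of the paper's Lemma 4.2, which you re-derive by a coefficient estimate), Hadamard factorization into $Cx^{m}\prod \left( 1+a_{k}x\right) $, and the composition law $\left( 6\right) $ exactly as the paper does. The only blemish is in your order-zero justification: $\left| \log \left| A_{n}\right| \right| $ need not be $o\left( n^{2}\right) $ (take $A_{n}=e^{-n^{3}}$), but since $f$ entire gives $\left| A_{n}\right| <1$ for large $n$, the term $-\log \left| A_{n}\right| $ is eventually nonnegative and the quadratic term $\tfrac{n\left( n+1\right) }{2}\log \left( 1/y_{0}\right) $ alone forces order zero, so the conclusion stands.
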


\bigskip

In $\left[ 2\right] $ (see also $\left[ 16\right] $ Definition 4.1 )
B\'{e}zivin introduced the following definition:

\begin{definition}
Let $q\in \mathbb{C}^{\ast }$, we say that a formal power series $f\left(
x\right) =\sum_{n\geq 0}A_{n}x^{n}$\textbf{\ }$\in \mathbb{C}\left[ \left[ x%
\right] \right] $ is $q$-Gevrey type $1$, if the power series 
\begin{equation*}
B_{q^{-1}}\left( f\right) \left( xq\right) =\sum_{n\geq
0}A_{n}x^{n}q^{-n\left( n-1\right) /2}
\end{equation*}
has a convergence radius different from zero.
\end{definition}

\bigskip First, we prove

\begin{lemma}
Let $q>0$ be such that the formal power series $f\left( x\right)
=\sum_{n\geq 0}A_{n}x^{n}$ $\in \mathbb{C}\left[ \left[ x\right] \right] $
is $q$-Gevrey type 1. Then for $\left| y\right| <1/q$, \ $B_{y}\left(
f\right) $ is an entire function of order zero. In particular if $f$ is an
entire function, $B_{y}\left( f\right) $ is an entire function of order zero
for $\left| y\right| <1$.
\end{lemma}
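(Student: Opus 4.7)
The plan is to convert the $q$-Gevrey hypothesis into a quantitative coefficient bound on $|A_n|$ and feed it into the defining series of $B_y(f)$; the super-exponential damping carried by $y^{n(n+1)/2}$ then does the rest.

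First I would unpack the definition. Saying that $\sum_n A_n q^{-n(n-1)/2} x^n$ has positive radius of convergence $R$ means, by Cauchy--Hadamard, that for every $R' < R$ there is a constant $C > 0$ with $|A_n| \leq C (R')^{-n} q^{n(n-1)/2}$ for all $n \geq 0$. Using the identity $n(n+1)/2 = n(n-1)/2 + n$, this bound substitutes into the series for $B_y(f)(x)$ to give
\[
\bigl|A_n x^n y^{n(n+1)/2}\bigr| \leq C \left( \frac{|x||y|}{R'} \right)^{\!n} (q|y|)^{n(n-1)/2}.
\]
With $\rho := q|y|$, the hypothesis $|y| < 1/q$ is precisely $\rho < 1$, so $\rho^{n(n-1)/2}$ decays like $\rho^{n^{2}/2}$ and crushes the purely exponential factor for every fixed $x$. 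The series therefore converges absolutely on all of $\mathbb{C}$, so $B_y(f)$ is entire.

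To read off the order I would estimate $M(r) := \max_{|x|=r} |B_y(f)(x)|$ by the same bound with $|x|$ replaced by $r$, and perform a standard saddle-point computation. Maximizing the logarithm $n \log(r|y|/R') + \tfrac{n(n-1)}{2} \log \rho$ in $n$ yields a critical index $n^{\ast} \sim \log r / \log(1/\rho)$ and a maximal term of size $\exp\!\bigl((\log r)^{2}/(2 \log(1/\rho))\bigr)$, while the nearby terms form a rapidly decreasing, essentially geometric tail. Summing gives $\log M(r) = O\bigl((\log r)^{2}\bigr)$ as $r \to \infty$, which is exactly the statement that $B_y(f)$ has order zero. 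The ``in particular'' clause is then immediate by taking $q = 1$: if $f$ is already entire then $B_{1}(f) = f$ has infinite radius of convergence, so $f$ is $1$-Gevrey type $1$, and $|y| < 1/q$ reduces to $|y| < 1$.

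The only step with any real content is the saddle-point estimate $\log M(r) = O((\log r)^{2})$; the coefficient bound and its substitution are purely algebraic. A cleaner alternative, if one prefers to avoid the optimization, is to majorize $B_y(f)$ term-by-term by a classical Jacobi-theta-type entire function, whose order zero is standard, and then invoke the elementary fact that the order of an entire function is dominated by that of any such majorant.
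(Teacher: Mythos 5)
Your proof is correct, and its skeleton matches the paper's: extract a coefficient bound $|A_n|\leq C\,(R')^{-n}q^{n(n-1)/2}$ from the $q$-Gevrey hypothesis, let the quadratic exponent $(q|y|)^{n(n-1)/2}$ with $q|y|<1$ kill everything, and handle the ``in particular'' clause by observing that an entire $f$ is automatically $1$-Gevrey of type $1$. Where you diverge is in the device used to pass from the coefficient decay to ``order zero''. The paper stays entirely at the level of coefficients: setting $C_n=A_ny^{n(n+1)/2}$, it shows $n^{\chi}|C_n|^{1/n}\to 0$ for every $\chi>0$, hence $n\log n/\log|C_n|^{-1}\to 0$, and then simply cites the standard coefficient formula for the order (Theorem 14.1.1 of Hille's book) to conclude; no estimate of the maximum modulus is ever made. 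You instead run a maximal-term/saddle-point computation to get $\log M(r)=O\bigl((\log r)^{2}\bigr)$ directly. Both are sound; the paper's route is shorter and dispenses with justifying the summation over the tail of terms near the critical index (a point you only sketch, though it is routine to complete, e.g.\ via your theta-majorant remark), while your route is self-contained, avoids invoking the coefficient-order theorem, and actually yields the sharper quantitative statement that $B_y(f)$ has finite logarithmic order, not merely order zero. Your handling of the final clause is also marginally slicker: you note that $B_{1}(f)=f$ having infinite radius makes $f$ trivially $1$-Gevrey of type $1$ by the definition, whereas the paper re-derives the bound $(18)$ with $q=1$, $A=1$ from $\lim_n\sqrt[n]{|A_n|}=0$; the two observations are of course equivalent.
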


\begin{proof}
It is easy to see that $f$ is $q$-Gevrey type $1$, is equivalent to the fact
that there exist $C,A>0$ such that for $n\in \mathbb{N}$%
\begin{equation}
\left| A_{n}\right| <Cq^{\tfrac{n\left( n+1\right) }{2}}A^{n}\text{.}
\end{equation}
Let $\chi $ be any positive number and $y$ a nonzero complex number such
that $\left| y\right| <1/q$. We have then for $n\in \mathbb{N}$ 
\begin{equation*}
n^{\chi }\left| A_{n}\,y^{n\left( n+1\right) /2}\right| ^{1/n}<n^{\chi
}C^{1/n}\left( \left| y\right| q\right) ^{\tfrac{n+1}{2}}A
\end{equation*}
with $\left| y\right| q<1.$ Hence the left-hand side of the above equation
tends to z\'{e}ro as $n\longrightarrow \infty $. There is therfore a naturel
integer $n_{1}$ such that for $n\geq n_{1}$ we have 
\begin{equation}
\left| A_{n}\,y^{n\left( n+1\right) /2}\right| ^{1/n}\leq \dfrac{1}{n^{\chi }%
}
\end{equation}
Let us write $B_{y}\left( f\right) =\sum_{n\geq 0}C_{n}x^{n}$, ie $%
C_{n}=A_{n}y^{n\left( n+1\right) /2}$. Taking $\chi =1$, equation $\left(
19\right) $ gives $\lim_{n\rightarrow \infty }\sqrt[n]{\left| C_{n}\right| }%
=0$, what already shows that $B_{y}\left( f\right) $ is an entire function.
This implies in particular that there is a naturel integer $n_{2}$ such that
for $n\geq n_{2}$ 
\begin{equation}
\log \left| C_{n}^{-1}\right| >0
\end{equation}
We set $n_{0}=\max \left( n_{1},n_{2}\right) $. From $\left( 19\right) $ and 
$\left( 20\right) $ we get for $n\geq n_{0}$ 
\begin{equation*}
0\leq \dfrac{n\log n}{\log \left| C_{n}^{-1}\right| }\leq \dfrac{1}{\chi }.
\end{equation*}
Since $\chi $ is arbitrary in $\mathbb{R}_{+}^{\ast }$, we deduce 
\begin{equation*}
\lim_{n\rightarrow \infty }\dfrac{n\log n}{\log \left| C_{n}^{-1}\right| }=0
\end{equation*}
It follows from Theorem 14.1.1 of $\left[ 11\right] $ that the order of $%
B_{y}\left( f\right) $ is zero.

Now if $f$ is an entire function, then $\lim_{n\rightarrow \infty }\sqrt[n]{%
\left| A_{n}\right| }=0$. There is thus a natural integer $n^{\prime }$ such
that, for $n\geq n^{\prime }$\ we have $\left| A_{n}\right| <1$. Hence, with 
$C=\max_{0\leq n\leq n^{\prime }}\left\{ \left| A_{n}\right| +1\right\} $
and $A=1$, $\left( 18\right) $ shows that $f$ is $1$-Gevrey type 1. $%
B_{y}\left( f\right) $ is therefore an entire function of order zero for $%
\left| y\right| <1$.
\end{proof}

\bigskip

\begin{proof}[Proof of Theorem 4.1]
$i)$ If $f$ is $L_{2}$, and $y\in \left] 0,1\right] $. We consider the
function $\varphi :\mathbb{C}$ $\longrightarrow \ \mathbb{C}$ defined by 
\begin{equation*}
\varphi \left( x\right) =y^{\tfrac{x\left( x+1\right) }{2}}=\exp \left( 
\frac{\ln y}{2}x^{2}+\frac{\ln y}{2}x\right) 
\end{equation*}
$\varphi $ is an entire function and $L_{2}$ because $\ln y\leq 0$. And it
has no positive zeros (because it has no zeros). The application of Theorem
3.2\ to $\ f$ and $\varphi $, shows that the function 
\begin{equation*}
\sum\limits_{n\geq 0}A_{n}\varphi \left( n\right) x^{n}=B_{y}\left( f\right)
\left( x\right) 
\end{equation*}
is $L_{2}$. The case $y=0$ is obvious since $B_{0}\left( f\right) \left(
x\right) =A_{0}$.

$ii)$ If $f$ is an entire function, and $y_{0}\in \left] 0,1\right[ $, we
know from Lemma 4.2 that $B_{y_{0}}\left( f\right) $ is an entire function
of order zero. Thus, it can be written 
\begin{equation*}
B_{y_{0}}\left( f\right) \left( x\right) =A_{m}y_{0}^{m\left( m+1\right)
/2}x^{m}\prod\limits_{p\geq 1}\left( 1+a_{p}x\right) \text{ ,}
\end{equation*}
where $A_{m}$ is the first non-zero coefficient of the Maclaurin series of $f
$,\ and where the product is normally convergent on each compact subset of $%
\mathbb{C}$. It follows that $B_{y_{0}}\left( f\right) $ is $L_{2}$ if $f$\
is real and $B_{y_{0}}\left( f\right) $ has only real zeros.

For $y$ $\in \left[ 0,y_{0}\right] $, we get by $\left( 6\right) $%
\begin{equation*}
\text{\ }B_{y}\left( f\right) =B_{\tfrac{y}{y_{0}}}\left( B_{y_{0}}\left(
f\right) \right) 
\end{equation*}
with $\dfrac{y}{y_{0}}\in \left[ 0,1\right] $.Thus, it follows from the
previous cases that\ $B_{y}\left( f\right) $ is $L_{2}$.
\end{proof}

\bigskip

\subsection{\protect\bigskip Constant $\Theta $\ associated with $%
B_{y}\left( f^{\flat }\right) $}

\bigskip

\begin{definition}
\bigskip Let $f^{\flat }$ be given by $\left( 3\right) $. We define 
\begin{equation*}
\Theta =\sup \left\{ y\in \left[ 0,+\infty \right[ \text{ such that }%
B_{y}\left( f^{\flat }\right) \text{ is an entire function and has only real
zeros}\right\} 
\end{equation*}
\end{definition}

\bigskip We are going to show that $\Theta $ satisfies the following
equivalences :

\begin{equation}
\left\{ 
\begin{tabular}{l}
{{*} For }$y\geq 0$ we have: $y\leq \Theta \text{ }\Longleftrightarrow \text{%
\ }\left\{ B_{y}\left( f^{\flat }\right) \text{ is an entire function whose
zeros are all real}\right\} $ \\ 
{*} $1\leq \Theta $ \ \ $\Longleftrightarrow $ \ RH
\end{tabular}
\right\}
\end{equation}
In fact, we will prove the more accurate following result:

\begin{theorem}
\bigskip We have

$i)$ If $0\leq y\leq \Theta $, $B_{y}\left( f^{\flat }\right) $ is $L_{1}$
and has thus only negative zeros.

$ii)$ If $\Theta <1$ and $\Theta <y\leq 1$ $\ B_{y}\left( f^{\flat }\right) $
is an entire function with at least two non-real zeros.

$iii)$ If \ $1<y$ \ \ the convergence radius of $B_{y}\left( f^{\flat
}\right) $ is zero.

$iv)$ $\Theta =1$ if and only if the Riemann hypothesis is true.

$v)$ We have $\ \ 0<\rho _{o}^{2}<1/4\leq \Theta \leq 1$.
\end{theorem}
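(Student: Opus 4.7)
The plan is to combine Theorem 4.1, Proposition 2.1 (Hurwitz-type continuity of zeros as $y$ varies), Proposition 3.1 (in particular, an $L_2$ function with positive Maclaurin coefficients is $L_1$), Lemma 4.2 (entirety of $B_y(f)$ for $|y|<1$ when $f$ is entire), and the Csordas inequality $\Omega_n^\flat\leq n/(n+1)$ used in the proof of Theorem 2.2. I would tackle the five parts in the order (iii), (v), (i), (ii), (iv).

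For (iii): since $\Xi$ has order $1$, the function $f^\flat(x)=\Xi(-i\sqrt{x})$ has order $1/2$, so along a suitable subsequence $|A_n^\flat|^{1/n}\sim n^{-2}$; for $|y|>1$, $|A_n^\flat y^{n(n+1)/2}|^{1/n}\sim n^{-2}|y|^{(n+1)/2}\to+\infty$, hence the convergence radius of $B_y(f^\flat)$ is zero and in particular $\Theta\leq 1$. For (v): the inequality $\rho_o^2<1/4$ is a one-line numerical check (at $\rho=1/2$, $\sum_{k\geq 1}\rho^{k^2}>1/2$, so $\rho_o<1/2$). The lower bound $\Theta\geq 1/4$ is the main difficulty; it rests on the classical Hutchinson criterion --- a real entire function with positive Maclaurin coefficients satisfying $\Omega_n\leq 1/4$ for every $n$ is necessarily $L_1$. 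A direct computation gives $\Omega_n^{B_y(f^\flat)}=y\,\Omega_n^\flat$, so the Csordas bound yields $\Omega_n^{B_{1/4}(f^\flat)}\leq n/(4(n+1))<1/4$, forcing $B_{1/4}(f^\flat)$ to be $L_1$ and therefore $\Theta\geq 1/4$.

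For (i), given $0\leq y<\Theta$, pick $y_0\in(y,\Theta)$ with $B_{y_0}(f^\flat)$ entire and real-rooted (available by definition of $\Theta$); Theorem 4.1(ii) gives that $B_y(f^\flat)$ is $L_2$, and since its Maclaurin coefficients $A_n^\flat y^{n(n+1)/2}$ are positive, Proposition 3.1 promotes this to $L_1$. At the endpoint $y=\Theta$, since $\Theta\leq 1$, Lemma 4.2 (or entirety of $f^\flat$ itself if $\Theta=1$) ensures $B_\Theta(f^\flat)$ is entire; a Hurwitz-type argument modelled on Proposition 2.1, using $y_n\nearrow\Theta$ along which $B_{y_n}(f^\flat)$ has only real zeros, shows $B_\Theta(f^\flat)$ has only real zeros and hence is $L_1$. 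For (ii): whenever $\Theta<y\leq 1$, Lemma 4.2 keeps $B_y(f^\flat)$ entire, yet by the very definition of $\Theta$ it cannot be real-rooted; real coefficients then force non-real zeros to come in conjugate pairs, giving at least two. Finally (iv): RH is equivalent to $f^\flat=B_1(f^\flat)$ having only real zeros, which by (i) applied at $y=1$ amounts to $\Theta\geq 1$; combined with $\Theta\leq 1$ from (iii), this gives $\Theta=1\Leftrightarrow$ RH.

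The central obstacle is the invocation of Hutchinson's $1/4$-criterion in (v); everything else follows from Theorems 2.2 and 4.1, Proposition 2.1, Lemma 4.2, and a routine Hurwitz-type boundary argument.
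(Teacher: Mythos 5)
Your proposal is correct in substance and reproduces the paper's overall architecture for (i), (ii) and (iv) — Theorem 4.1 as the contraction principle, Proposition 3.1 to promote $L_2$ with positive coefficients to $L_1$, Proposition 2.1 for the endpoint $y=\Theta$ — but it replaces the paper's two auxiliary lemmas by different tools. For (iii), the paper proves and applies Lemma 4.4: since $\Omega_n^\flat\to 1$ by the Csordas--Norfolk--Varga double inequality (14), a ratio-test argument on $C_n=A_n^\flat y^{n(n+1)/2}$ forces radius of convergence zero for $|y|>1$; you instead use the coefficient--order formula and the fact (stated in the introduction) that $f^\flat$ has order exactly $1/2$, so that $|A_n^\flat|^{1/n}$ is at least $n^{-2-\varepsilon}$ infinitely often (your ``$\sim n^{-2}$'' should be read this way) and $|C_n|^{1/n}\to\infty$ along a subsequence. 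Both are valid; the paper's route uses only the $\Omega_n^\flat$ bounds it already needs elsewhere, yours uses a standard growth fact and is shorter. For (v), the paper proves Lemma 4.5 by showing the Jensen polynomials $B_y(J_n)$ satisfy Kurtz's $1/4$-criterion and then invoking Jensen's characterization (Theorem 3.3), whereas you apply Hutchinson's $1/4$-criterion directly to the entire series via the identity $\Omega_n^{B_y(f^\flat)}=y\,\Omega_n^\flat\leq \tfrac{n}{4(n+1)}<1/4$ at $y=1/4$; this is essentially the same mathematics (Kurtz's polynomial criterion is the finite analogue of Hutchinson's theorem), applied one level up, and it suffices since membership of the single point $y=1/4$ in the defining set already gives $\Theta\geq 1/4$. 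Your treatment of (ii) and (iv) is in fact more economical than the paper's: you read the failure of real-rootedness for $\Theta<y\leq 1$ straight off the definition of the supremum (entirety coming from Lemma 4.2, resp. $B_1(f^\flat)=f^\flat$), where the paper argues by contradiction through Theorem 4.1; both are sound. Two small points to tighten: in (i) the supremum only guarantees some $y_0\in(y,\Theta]$ with $B_{y_0}(f^\flat)$ real-rooted, so the case $y_0=\Theta=1$ should be routed through Theorem 4.1(i) rather than 4.1(ii) (the paper's own wording has the same slip); and at the endpoint $y=\Theta$ your ``hence is $L_1$'' needs the order-zero (or genus-zero) Hadamard factorization plus positivity of the coefficients, or, as the paper does, the closedness of $L_1$ under locally uniform limits — either way the ingredients are at hand.
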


\bigskip This theorem is one of the main results of the article. In
particular, the point $iv)$ gives a reformulation of RH. To prove this
theorem, we need two Lemmas.

\begin{lemma}
Let $f\left( x\right) =\sum_{n\geq 0}A_{n}x^{n}$ be an entire function with $%
A_{n}\neq 0$ for all $n$ and satisfying $\lim \inf_{n\geq 1}\left| \Omega
_{n}\right| \geq 1$. Then the radius of convergence of $B_{y}\left( f\right) 
$ is zero for all $y\in \mathbb{C}$ such that $\left| y\right| >1$.
\end{lemma}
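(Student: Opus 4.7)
The plan is to derive a geometric lower bound on the ratios $r_n := |A_n/A_{n-1}|$ from the hypothesis $\liminf_{n}|\Omega_n| \geq 1$, use it to lower-bound $|A_n|$ by a term growing like a power of $(1-\delta)$ in $n^2$, and then observe that when $|y|>1$ this beats any fixed radius of convergence for $B_y(f)$. Heuristically, the quadratic exponent $n(n+1)/2$ appearing in $y^{n(n+1)/2}$ is what provides the room.

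Concretely, I would proceed as follows. Fix $|y|>1$ and choose $\delta\in(0,1)$ small enough that $(1-\delta)|y|>1$. The recursion $r_{n+1}=r_n|\Omega_n|$ is legitimate because $A_n\neq 0$ for all $n$, and by the liminf hypothesis there exists $N$ with $|\Omega_n|\geq 1-\delta$ for every $n\geq N$. Iterating the recursion gives $r_n \geq r_N (1-\delta)^{n-N}$ for $n\geq N$, and telescoping then yields
\[
|A_n| \geq |A_{N-1}|\, r_N^{\,n-N+1}\, (1-\delta)^{(n-N)(n-N+1)/2}\qquad (n\geq N).
\]
Taking $n$-th roots and using the expansion $(n-N)(n-N+1)/(2n) = n/2 + O(1)$, one extracts a constant $c>0$ and an integer $n_0$ with $|A_n|^{1/n} \geq c(1-\delta)^{n/2}$ for all $n\geq n_0$.

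Consequently the $n$-th coefficient $C_n = A_n y^{n(n+1)/2}$ of $B_y(f)$ satisfies
\[
|C_n|^{1/n} = |A_n|^{1/n}\,|y|^{(n+1)/2} \geq c\,|y|^{1/2} \bigl[(1-\delta)|y|\bigr]^{n/2}
\]
for $n\geq n_0$, and the right-hand side tends to $+\infty$ since $(1-\delta)|y|>1$ by the choice of $\delta$. Hence $\limsup_n |C_n|^{1/n} = +\infty$ and $B_y(f)$ has radius of convergence zero. I expect the only technical care to lie in passing from the asymptotic hypothesis $\liminf|\Omega_n|\geq 1$ to the uniform bound $r_n\geq r_N(1-\delta)^{n-N}$ valid past a threshold; once that is in hand, the rest is just the observation that the quadratic term $n(n+1)/2$ in the exponent of $y$ dominates whatever at-most-geometric decay in $r_n$ the hypothesis on $\Omega_n$ permits.
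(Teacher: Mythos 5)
Your proof is correct and rests on the same mechanism as the paper's: use $\liminf|\Omega_n|\geq 1$ to get a uniform lower bound $|\Omega_n|\geq 1-\delta$ (the paper uses $|\Omega_n|>\mu^{-1}$ with $\mu=(|y|+1)/2$) past a threshold, so that the at-most-geometric decay of the ratios $|A_n/A_{n-1}|$ is overwhelmed by the quadratic exponent in $y^{n(n+1)/2}$ when $|y|>1$. The only cosmetic difference is that you telescope into a lower bound on $|A_n|$ and invoke the root test, whereas the paper concludes directly from the divergence of the ratios $|C_n/C_{n-1}|$.
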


\begin{proof}
\bigskip Let $y\in \mathbb{C}$, $\left| y\right| >1$ we have $\left|
y\right| >\mu =\dfrac{\left| y\right| +1}{2}>1.$ By assumption, there exists
an integer $n_{0}\geq 1$ such that for $n\geq n_{0}$ we have $\left| \Omega
_{n}\right| >\mu ^{-1}$. In addition, we have for all $n\geq 2$: 
\begin{equation*}
\dfrac{A_{n}}{A_{n-1}}=\Omega _{n-1}\Omega _{n-2}...\Omega _{1}\dfrac{A_{1}}{%
A_{0}}.
\end{equation*}
If, as above $C_{n}=A_{n}y^{n\left( n+1\right) /2}$, we have for $n>n_{0}$ 
\begin{equation*}
\dfrac{C_{n}}{C_{n-1}}=\Omega _{n-1}...\Omega _{n_{0}}y^{n-n_{o}}M\text{ \ \
with }M=\Omega _{n_{0}-1}...\Omega _{1}y^{n_{0}}\dfrac{A_{1}}{A_{0}}
\end{equation*}
thus 
\begin{equation*}
\left| \dfrac{C_{n}}{C_{n-1}}\right| \geq \left( \mu ^{-1}\left| y\right|
\right) ^{n-n_{0}}\left| M\right| .
\end{equation*}
$M$ is a constant different from zero and $\mu ^{-1}\left| y\right| >1$. It
follows that the convergence radius of $B_{y}\left( f\right) $ is zero.
\end{proof}

\bigskip

\begin{lemma}
\bigskip \bigskip \bigskip For any formal power without gaps $f\left(
x\right) =\sum_{n\geq 0}A_{n}x^{n}\in \mathbb{C}\left[ \left[ x\right] %
\right] $, satisfying $A_{0}\neq 0$ and $\Omega <+\infty $, the $y$-Borel
transform of $f$ is an entire function of order zero for $\left| y\right|
<1/\Omega $. In addition, if the coefficients $A_{n}$ are positive, $%
B_{y}\left( f\right) $ is a function $L_{1}$ for $y\in \left[ 0,1/4\Omega %
\right] .$
\end{lemma}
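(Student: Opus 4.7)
The plan is to prove the two assertions of the lemma in order, using different tools for each.

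First, I would establish a Gevrey-type estimate on the coefficients of $f$ that allows invoking Lemma 4.2. Iterating the identity $A_k/A_{k-1}=\Omega_{k-1}(A_{k-1}/A_{k-2})$, which is just a rewriting of the definition of $\Omega_k$, gives $A_n/A_{n-1}=\Omega_{n-1}\Omega_{n-2}\cdots \Omega_1\cdot A_1/A_0$, so by telescoping
\[
|A_n|\;\leq\; |A_0|\,\Omega^{n(n-1)/2}\,(|A_1|/|A_0|)^n
\]
(the degenerate case $\Omega=0$ forces $A_n=0$ for $n\geq 2$ and the result is trivial). Setting $C:=|A_0|$ and $A:=|A_1/A_0|/\Omega$, this rearranges into $|A_n|\leq C\,\Omega^{n(n+1)/2}A^n$, i.e.\ inequality (18) with $q=\Omega$. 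Hence $f$ is $\Omega$-Gevrey type 1 and Lemma 4.2 yields that $B_y(f)$ is entire of order zero for every $|y|<1/\Omega$.

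Second, for the $L_1$ assertion, I would exploit the Jensen polynomial criterion (Theorem 3.3) together with Hutchinson's sharp $1/4$-inequality. Assume $A_n>0$ and fix $y\in[0,1/(4\Omega)]$; writing $B_y(f)(x)=\sum_{n\geq 0}C_n x^n$ with $C_n:=A_n\,y^{n(n+1)/2}>0$, the key observation is the identity
\[
\frac{C_{n-1}C_{n+1}}{C_n^2}\;=\;y\,\Omega_n\;\leq\; y\,\Omega\;\leq\; \tfrac{1}{4},
\]
since the $y$-exponent of $C_{n-1}C_{n+1}/C_n^2$ evaluates to $(n-1)n/2+(n+1)(n+2)/2-n(n+1)=1$. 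Transferring this to the Jensen polynomials $J_n(x)=\sum_{k=0}^n \tfrac{n!}{(n-k)!}C_k x^k$, a short computation produces the analogous Tur\'an ratio $(n-k)/(n-k+1)\cdot y\,\Omega_k$ for their coefficients, still bounded by $1/4$ thanks to the factor $(n-k)/(n-k+1)<1$. Hutchinson's classical theorem (1923) then asserts that any polynomial with positive coefficients whose consecutive Tur\'an ratios are at most $1/4$ has only real (hence negative) zeros. Every $J_n$ therefore has only real zeros, so by Theorem 3.3 the function $B_y(f)$ is $L_2$, and since its Maclaurin coefficients are positive, the second bullet of Proposition 3.1 upgrades this to $L_1$.

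The main obstacle is really the appeal to the sharp Hutchinson constant $1/4$: the method recalled in Section 2 only yields the weaker constant $\rho_o^2\approx 0.2078$, so one cannot reprove the lemma by invoking Theorem 16 of [3] alone. Every other ingredient is routine bookkeeping: the telescoping Gevrey bound, the one-line $y$-exponent computation for $C_{n-1}C_{n+1}/C_n^2$, and the combinatorial factor $(n-k)/(n-k+1)$ entering the Jensen polynomials. Provided Hutchinson's polynomial inequality can be cited as a black box, no serious technical difficulty remains.
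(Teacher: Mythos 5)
Your proposal is correct and follows essentially the same route as the paper: the telescoped bound $|A_n|\leq |A_0|\,\Omega^{n(n-1)/2}|A_1/A_0|^n$ showing $f$ is $\Omega$-Gevrey of type 1 so that Lemma 4.2 applies, and then the Tur\'an-ratio computation $\tfrac{n-k}{n-k+1}\,y\,\Omega_k\leq 1/4$ for the coefficients of $B_y(J_n)$ combined with the Jensen-polynomial criterion of Theorem 3.3. The only cosmetic differences are that the paper invokes Kurtz's theorem (via Section 4 of [7]) where you cite Hutchinson's $1/4$-criterion, and that it concludes $L_1$ directly from negative zeros of the Jensen polynomials rather than passing through $L_2$ and Proposition 3.1.
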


\begin{proof}
\bigskip With an easy recurrence, we have for $n\geq 1$%
\begin{equation*}
A_{n}=\Omega _{n-1}\left( \Omega _{n-2}\right) ^{2}...\left( \Omega
_{1}\right) ^{n-1}\dfrac{A_{1}^{n}}{A_{0}^{n-1}}
\end{equation*}
then 
\begin{equation*}
\left| A_{n}\right| \leq \Omega ^{\tfrac{n\left( n+1\right) }{2}}\left| 
\dfrac{A_{1}}{A_{0}\Omega }\right| ^{n}\left| A_{0}\right| 
\end{equation*}
By comparison with the inequality $\left( 18\right) $, it follows that $f$
is $q$-Gevrey type 1 for $q=\Omega $. Lemma 4.2 then shows that $B_{y}\left(
f\right) $ is an entire function of order zero when $\left| y\right|
<1/\Omega $.

Assume now $A_{n}>0$ and consider the Jensen polynomials of $f$, $J_{n}$. It
is easy to see that the Jensen polynomials of the $y$-Borel transform of $f$
\ are the $y$-Borel transform of the Jensen polynomials of $f$, ie 
\begin{equation}
B_{y}\left( J_{n}\right) \left( x\right) =\sum\limits_{k=0}^{n}\dfrac{n!}{%
\left( n-k\right) !}A_{k}x^{k}y^{\tfrac{k\left( k+1\right) }{2}%
}=\sum\limits_{k=0}^{n}C_{n,k}x^{k}\text{.}
\end{equation}
For $y>0$, we have $C_{n,k}>0$ and for $1\leq k\leq n-1$%
\begin{equation*}
\dfrac{C_{n,k-1}C_{n,k+1}}{C_{n,k}^{2}}=\dfrac{A_{k-1}A_{k+1}}{A_{k}^{2}}%
\dfrac{\left( n-k\right) }{\left( n-k+1\right) }y
\end{equation*}
and then, for $y\in \left[ 0,1/4\Omega \right] $ 
\begin{equation*}
\dfrac{C_{n,k-1}C_{n,k+1}}{C_{n,k}^{2}}<1/4
\end{equation*}
It follows from a theorem of Kurtz ( see section 4 of $\left[ 7\right] $),
\bigskip that all the zeros of $B_{y}\left( J_{n}\right) $ are negative.
This being true for any $n\in \mathbb{N}$, we deduce from \ the theorem 3.3
that $B_{y}\left( f\right) $ is $L_{1}$for $y\in \left] 0,1/4\Omega \right] $%
. The case $y=0$ is obvious.
\end{proof}

\bigskip

\begin{proof}[Proof of Theorem 4.3]
\bigskip According to the double inequality $\left( 14\right) $, we have $%
\lim_{n\rightarrow \infty }\Omega _{n}^{\flat }=1$. Thus Lemma 4.4 shows
that for $y>1$, $B_{y}\left( f^{\flat }\right) $ has a convergence radius
equal to zero, which proves $iii)$. This also shows that $\Theta \leq 1$, ie
the last inequality of $v)$.

Now let $y$ be such that $0\leq y<\Theta \leq 1$. According to the
definition of $\Theta $, there is $y_{0}$ such that $y<y_{0}<\Theta \leq 1$
and such that $B_{y_{0}}\left( f^{\flat }\right) $ has only real zeros. It
follows from item $ii)$ of\ Theorem 4.1 that $B_{y}\left( f^{\flat }\right) $
is $L_{2}$. In addition, it is clear that the coefficients of the Maclaurin
series of $B_{y}\left( f^{\flat }\right) $\ are positive. Therefore, $%
B_{y}\left( f^{\flat }\right) $ is actually $L_{1}$ with Proposition 3.1.
Since $L_{1}$ is closed for the topology of uniform convergence on each
compact subset of $\mathbb{C}$ (see Proposition 3.1), we deduce from
Proposition 2.1 that $B_{\Theta }\left( f^{\flat }\right) $ is also $L_{1}$.
This proves $i)$.

We reason by contradiction to prove $ii)$. Assume that $\Theta <1$ and that
there is $y_{0}\in \left] \Theta ,1\right] $ such that $B_{y_{0}}\left(
f^{\flat }\right) $ has only real zeros.

- If $y_{0}\in \left] \Theta ,1\right[ $ the point $ii)$ of\ Theorem 4.1
shows that for $y\in \left[ 0,y_{0}\right] $ the zeros of $B_{y}\left(
f^{\flat }\right) $ are real numbers, in contradiction with the definition
of $\Theta $.

- If $y_{0}=1$ the function $B_{1}\left( f^{\flat }\right) =f^{\flat }$ is $%
L_{2}$ because we know that $f^{\flat }$ is written as $f^{\flat }\left(
x\right) $=$A_{0}^{\flat }\prod_{p\geq 1}\left( 1-\dfrac{x}{x_{p}}\right) $
with $A_{0}^{\flat }>0$. The point $i)$ of\ Theorem 4.1, then shows that the
zeros of $B_{y}\left( f^{\flat }\right) $ are real for $y\in \left[ 0,1%
\right] $, in contradiction with $\Theta <1$.

For the point $iv)$, the implication: RH $\Longrightarrow \Theta =1$ is now
clear with $ii)$, since we already know that $\Theta \leq 1$. The reciprocal
implication follows from $i)$.

To complete the proof of $v)$ we apply Lemma 4.5 to $f^{\flat }$ with $%
\Omega =1$. Hence $B_{y}\left( f^{\flat }\right) $ is $L_{1}$ and has all
its zeros real for $y\in \left[ 0,1/4\right] $, which proves $\Theta \geq 1/4
$
\end{proof}

\begin{remark}
\bigskip We can now prove that for the function $f^{\flat }$,$\ $we have $%
\rho _{o}^{2}\leq R^{\ast }\leq 1$. The inequality $\rho _{o}^{2}\leq
R^{\ast }$\ follows from Theorem 2.2. In addition, assume that $R^{\ast }>1$%
, then the theorem 3 of $\left[ 2\right] $ shows that we have for $\left|
y\right| <R^{\ast }$%
\begin{equation*}
B_{y}\left( f^{\flat }\right) \left( x\right) =A_{0}^{\flat
}\prod\limits_{p\geq 1}\left( 1+\alpha _{p}\left( y\right) x\right) \text{ ,}
\end{equation*}
where the product is normally convergent on any compact subset of $\mathbb{C}
$. Thus, $B_{y}\left( f^{\flat }\right) $ would be an entire function for $%
y\in \left] 1,R^{\ast }\right[ $ contradicting $iii)$ of Theorem 4.3.
\end{remark}

\subsection{\protect\bigskip Analogy of $\Theta $ with the de Bruijn-Newman
constant}

In $\left[ 13\right] $ Newman, working after de Bruijn $\left[ 6\right] $\
on the Fourier transforms with real zeros (see Remark 1), defines for $%
\lambda \in \mathbb{R}$ 
\begin{equation*}
\Xi _{\lambda }\left( z\right) =2\int\limits_{0}^{\infty }\exp \left(
\lambda u^{2}\right) \Phi \left( u\right) \cos zu\,du\text{ .}
\end{equation*}
That is the Fourier transform of $\exp \left( \lambda u^{2}\right) \Phi
\left( u\right) $ where 
\begin{equation*}
\Phi \left( u\right) =\sum\limits_{n=1}^{\infty }\left( 4\pi
^{2}n^{4}e^{9u/2}-6\pi n^{2}e^{5u/2}\right) \exp \left( -\pi
n^{2}e^{2u}\right) \text{ .}
\end{equation*}
We have $\Xi _{0}\left( z\right) =\Xi \left( z\right) $ where $\Xi $ is the
Riemann Xi function. The constant $\Lambda $ of de Bruijn-Newman, whose the
existence was proved in $\left[ 13\right] $, satisfies the properties (we
follow the presentation given in $\left[ 1\right] $ Section 3.2.2) : 
\begin{equation*}
\left\{ 
\begin{tabular}{l}
{*} For $\lambda \in \mathbb{R}$, we have: $\Lambda \leq 4\lambda
\Longleftrightarrow \left\{ \Xi _{\lambda }\text{ is an entire function
whose zeros are all real}\right\} $ \\ 
{*} $\Lambda \leq 0$ $\Longleftrightarrow $ RH
\end{tabular}
\right\}
\end{equation*}
By comparison with the properties $\left( 21\right) $, we see that the
constant $\Theta $ plays a role similar to that of the constant $\Lambda $.
It would be interesting to find an equation relating $\Theta $ to $\Lambda $%
. We leave aside the issue in the following of the article.

\section{Another reformulation of the Riemann hypothesis}

Here is the\bigskip\ reformulation:

\begin{theorem}
Let $A_{n}^{\flat }$ be defined by the equation $\left( 2\right) $\ where $%
\Xi $ is the Riemann Xi function given by $\left( 1\right) $. For $n\geq 0$,
denote by $\Delta _{n}\left( y\right) $ the discriminant of the following
polynomial 
\begin{equation*}
B_{y}\left( J_{n}^{\flat }\right) \left( x\right) =\sum_{k=0}^{n}\dfrac{n!}{%
\left( n-k\right) !}A_{k}^{\flat }x^{k}y^{k\left( k+1\right) 2}\ \text{.}
\end{equation*}
Then the Riemann hypothesis is true if and only if for all $n\geq 2$, $%
\Delta _{n}\left( y\right) $ does not vanish on the interval $\left[ 1/4,1%
\right[ $\bigskip .
\end{theorem}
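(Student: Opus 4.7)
The plan is to reduce both directions to statements about the Jensen polynomials of $B_y(f^\flat)$, combining Theorem 3.3 (the Jensen--Polya characterizations of $L_1$ and $L_2$) with the structural result of Theorem 4.3. The first observation, already used inside the proof of Lemma 4.5, is that $B_y(J_n^\flat)$ is exactly the $n$-th Jensen polynomial of $B_y(f^\flat)$; hence $\Delta_n(y)$ vanishes if and only if this Jensen polynomial has a multiple root.

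For the direct implication, I would argue as follows. Suppose RH holds. By Theorem 4.3(iv) we have $\Theta = 1$, so Theorem 4.3(i) gives that $B_y(f^\flat)$ is $L_1$ (hence $L_2$) for every $y \in [1/4, 1[$. Since $y < 1$, Lemma 4.2 ensures that $B_y(f^\flat)$ is entire of order zero; in particular it is not a polynomial (all $A_n^\flat$ are nonzero) and not of the form $P(x)e^{\alpha x}$ with $\alpha \in \mathbb{R}^{\ast}$, which would have order one. The Polya refinement in Theorem 3.3 then yields that every Jensen polynomial $B_y(J_n^\flat)$ has only \emph{simple} real zeros, whence $\Delta_n(y) \neq 0$ for all $n \geq 2$.

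For the converse, I would argue by contraposition. Assume RH fails; then $\Theta < 1$ by Theorem 4.3(iv), while $\Theta \geq 1/4$ by Theorem 4.3(v). Fix any $y_1 \in \left]\Theta, 1\right[$; by Theorem 4.3(ii) the function $B_{y_1}(f^\flat)$ has non-real zeros, so it cannot be $L_2$ (otherwise, in view of its positive Maclaurin coefficients, Proposition 3.1 would upgrade it to $L_1$). Theorem 3.3 then provides an index $n_0 \geq 2$ such that $B_{y_1}(J_{n_0}^\flat)$ has a non-real zero. Next I would run a continuity-of-roots argument for $y \in [\Theta, y_1]$: the leading coefficient $A_{n_0}^\flat y^{n_0(n_0+1)/2}$ is strictly positive, the degree is $n_0$ throughout, and the multiset of roots depends continuously on $y$. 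At $y = \Theta$ the Polya argument of the first half still applies (Lemma 4.2 is available because $\Theta < 1$), so all $n_0$ roots are real and simple. Setting
\[
y^{*} = \sup\{\,y \in [\Theta, y_1] : B_y(J_{n_0}^\flat) \text{ has only real roots}\,\},
\]
this supremum is attained by closedness and satisfies $y^{*} < y_1$ since $y_1$ is excluded. The polynomial $B_{y^{*}}(J_{n_0}^\flat)$ must carry a multiple root: otherwise simple-root persistence via the implicit function theorem would keep all its roots real on a right neighbourhood of $y^{*}$, contradicting the definition of $y^{*}$. Hence $\Delta_{n_0}(y^{*}) = 0$ with $y^{*} \in [\Theta, y_1] \subset [1/4, 1[$, contradicting the hypothesis.

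The main obstacle is the continuity-of-roots step in the converse: one must make the transition from ``all simple real roots'' to ``a non-real root'' occur strictly below $y_1$ (closedness of $S$) and strictly at a multiple-root configuration of the polynomial (simple-root persistence), and anchor the whole argument at $y = \Theta$ so that $y^{*} \geq \Theta \geq 1/4$. Polya's simple-zeros statement at $y = \Theta$ is the pivot that makes all three ingredients cooperate.
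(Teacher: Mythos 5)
Your proposal is correct, but its converse half takes a genuinely different route from the paper's. The forward direction is essentially the paper's: both of you identify $B_{y}\left( J_{n}^{\flat }\right) $ as the $n$-th Jensen polynomial of $B_{y}\left( f^{\flat }\right) $, show that $B_{y}\left( f^{\flat }\right) $ is $L_{2}$ of order zero for $y\in \left[ 1/4,1\right[ $ (you via $\Theta =1$ and Theorem 4.3 $i)$, the paper via Theorem 4.1 and Lemma 4.2), and invoke Polya's refinement in Theorem 3.3 to get simple zeros, hence nonvanishing discriminants. For the converse, the paper proves the general criterion of Lemma 5.3 $ii)$, whose engine is Lemma 5.2 $ii)$: the explicit root series $x_{p}\left( y\right) $ of $\left[ 3\right] $, convergent for $\left| y\right| <\rho _{o}^{2}\Omega ^{-1}$, are continued analytically along the real axis (nonvanishing of the discriminant keeps the exceptional set off $\left[ \rho _{o}^{2}\Omega ^{-1},1\right[ $), using the monodromy theorem, the permanence of functional equations and the Schwarz reflection principle, to conclude that each Jensen polynomial of $f^{\flat }$ has only real zeros, whence $f^{\flat }$ is $L_{1}$ by Jensen's theorem. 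You instead argue by contraposition through $\Theta $: if RH fails then $\Theta \in \left[ 1/4,1\right[ $, Theorem 4.3 $ii)$ plus Jensen's theorem give an $n_{0}\geq 2$ and $y_{1}\in \left] \Theta ,1\right[ $ with $B_{y_{1}}\left( J_{n_{0}}^{\flat }\right) $ having a non-real zero, while at $y=\Theta $ all its zeros are real; since the leading coefficient $n_{0}!A_{n_{0}}^{\flat }y^{n_{0}\left( n_{0}+1\right) /2}$ stays bounded away from zero on $\left[ \Theta ,y_{1}\right] $, the locus of $y$ with all roots real is closed, and persistence of simple real roots of a real polynomial (local root counting plus conjugation symmetry, or the real implicit function theorem) forces a multiple root at the supremum $y^{\ast }<y_{1}$, i.e. $\Delta _{n_{0}}\left( y^{\ast }\right) =0$ with $y^{\ast }\in \left[ 1/4,1\right[ $. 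Your route is shorter and bypasses the analytic continuation and monodromy machinery entirely, at the price of being specific to $f^{\flat }$ (it leans on Theorem 4.3, hence on Lemma 4.5 and Kurtz's theorem for $\Theta \geq 1/4$) and of not producing the general criteria of Lemmas 5.2 and 5.3, which the paper obtains along the way and which have independent interest; to make it complete you should spell out the two standard facts you flag, closedness of the ``all roots real'' locus and reality preservation of simple roots, both routine under the stated leading-coefficient bound.
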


\bigskip To prove this theorem, we need two lemmas.

\begin{lemma}
\bigskip Let $\mathbb{K}$ be a field of characteristic zero and $P\left(
x\right) =A_{0}+A_{1}x+...+A_{n}x^{n}\in \mathbb{K}\left[ x\right] $ with $%
A_{k}\neq 0$ for $0\leq k\leq n$. Then

i) The discriminant $\Delta \left( y\right) $ of $B_{y}\left( P\right) $\ is
a polynomial in $y$ not identically zero.

ii) If $\mathbb{K}=\mathbb{R}$, $\Omega =\sup_{1\leq k\leq n-1}\left|
A_{k-1}A_{k+1}/A_{k}^{2}\right| $ and if we assume the roots of $B_{y}\left(
P\right) $ are simple for $y\in \left[ \rho _{o}^{2}\Omega ^{-1},1\right[ $, 
$P$ has only real roots.
\end{lemma}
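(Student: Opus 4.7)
For part (i), I begin by observing that $B_{y}(P)(x)=\sum_{k=0}^{n}A_{k}y^{k(k+1)/2}x^{k}$ has coefficients that are monomials in $y$, so the classical expression of the discriminant as a polynomial in the coefficients yields $\Delta(y)\in\mathbb{K}[y]$. For the non-vanishing, I would invoke Theorem 16 of [3] (the same input used for Theorem 2.2) applied to the polynomial $P$, with the convention $\Omega_{k}=0$ for $k\geq n$: for $0<|y|<\rho_{o}^{2}\Omega^{-1}$ the $n$ roots of $B_{y}(P)$ are the numbers $-1/\alpha_{p}(y)$, $1\leq p\leq n$, where by $(8)$ and $(9)$ one has $\alpha_{p}(y)=(A_{p}/A_{p-1})y^{p}+O(y^{p+1})$, a power series of $y$-valuation exactly $p$. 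Since these $n$ formal series have pairwise distinct valuations, the roots $-1/\alpha_{p}(y)$ are pairwise distinct complex numbers for all sufficiently small $y\neq 0$, so $\Delta(y)\neq 0$ on a punctured neighborhood of $0$, and a fortiori $\Delta\not\equiv 0$.

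For part (ii), I would argue by a continuity/connectedness argument along the segment $[\rho_{o}^{2}\Omega^{-1},1]$. Applying Theorem 16 of [3] to the real polynomial $P$ as above shows that for $y\in[0,\rho_{o}^{2}\Omega^{-1}[$ the $n$ zeros of $B_{y}(P)$ are all real (the coefficients of each $\alpha_{p}$ are real), and Proposition 2.1 extends this to the endpoint $y=\rho_{o}^{2}\Omega^{-1}$. Let
\[
E=\{\,y\in[\rho_{o}^{2}\Omega^{-1},1[\ :\ \text{all zeros of }B_{y}(P)\text{ are real}\,\}.
\]
Then $E$ contains $\rho_{o}^{2}\Omega^{-1}$ and is closed in $[\rho_{o}^{2}\Omega^{-1},1[$, because the zeros of a degree-$n$ polynomial depend continuously on its coefficients, so a limit of real zeros is real. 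The openness of $E$ is where the simplicity hypothesis enters: at any $y_{0}\in E$, the polynomial $B_{y_{0}}(P)$ has $n$ simple real zeros, hence $\partial_{x}B_{y_{0}}(P)$ does not vanish at any of them, so the real implicit function theorem produces real-analytic root functions $x_{p}(y)$ on a real neighborhood of $y_{0}$, placing that neighborhood inside $E$. By connectedness, $E=[\rho_{o}^{2}\Omega^{-1},1[$.

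To conclude, I would pass to $y=1$: the coefficients of $B_{y}(P)$ converge to those of $B_{1}(P)=P$ as $y\to 1^{-}$, so each of the $n$ zeros of $P$ is a limit of real zeros of $B_{y}(P)$ with $y<1$, and is therefore real. The main step requiring care is the openness of $E$: it is exactly the simplicity assumption that prevents two real zeros from colliding and splitting into a complex-conjugate pair as $y$ crosses some value in $[\rho_{o}^{2}\Omega^{-1},1[$, which is the only mechanism by which reality of all zeros could be lost along a continuous real deformation of a real polynomial.
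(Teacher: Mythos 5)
Your argument is essentially correct, but for part (ii) it takes a genuinely different and more elementary route than the paper. The paper works complex-analytically: starting from the convergent series $\alpha_{p}\left( y\right) $ on the punctured disk $D^{\ast }\left( 0,\rho _{o}^{2}\Omega ^{-1}\right) $, it continues the root functions $x_{p}\left( y\right) $ analytically along paths in rectangles avoiding the exceptional set $S$ (whose intersection with $\left[ \rho _{o}^{2}\Omega ^{-1},1\right[ $ is empty precisely because of the simplicity hypothesis), invokes the monodromy theorem and the permanence of functional equations to get holomorphic functions $\widehat{x_{p}}$ on an open set $O\supset \left] 0,1\right[ $ that are exactly the roots, and then uses the Schwarz reflection principle to force reality. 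You replace all of this by a real-variable open-closed argument on the connected interval $\left[ \rho _{o}^{2}\Omega ^{-1},1\right[ $: closedness from continuity of the roots (the leading coefficient $A_{n}y^{n\left( n+1\right) /2}$ stays away from zero), openness from the implicit function theorem at simple real roots, which is exactly where the simplicity hypothesis blocks the collision-and-splitting mechanism; both proofs share the same entry point (Theorem 16 of $\left[ 3\right] $ giving reality of the zeros for $y\leq \rho _{o}^{2}\Omega ^{-1}$) and the same exit (continuity of the zeros as $y\rightarrow 1^{-}$, i.e.\ Proposition 2.1). Your version buys simplicity and avoids the machinery of algebraic functions; the paper's version buys holomorphic root functions on a full complex neighborhood of $\left] 0,1\right[ $, which is reused in the discussion after the Conjecture in Section 6. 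Two small repairs you should make: in part (i) the statement is over an arbitrary field $\mathbb{K}$ of characteristic zero, so you cannot argue with ``pairwise distinct complex numbers for small $y$'' (Theorem 16 of $\left[ 3\right] $ is an analytic statement, and $\mathbb{K}$ need not embed in $\mathbb{C}$); the distinct-valuation observation you already make should be used purely formally, as in the paper, to conclude that the roots are distinct elements of $\mathbb{K}\left( \left( y\right) \right) $, hence $\Delta \left( y\right) \in \mathbb{K}\left[ y\right] $ is nonzero. And in part (ii) you should dispose of the degenerate case $\rho _{o}^{2}\Omega ^{-1}\geq 1$ (empty interval, vacuous hypothesis) as the paper does, via Corollary 18 of $\left[ 3\right] $, since your connectedness argument needs the interval to be nonempty.
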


\begin{proof}
\bigskip $i)$ By application of Theorem 1 of $\left[ 3\right] $, we have in
the ring $\mathbb{K}\left[ \left[ x,y\right] \right] $: 
\begin{equation*}
B_{y}\left( P\right) \left( x\right) =A_{0}\prod\limits_{p=1}^{n}\left(
1+\alpha _{p}\left( y\right) x\right) 
\end{equation*}
with (see equations $\left( 8\right) $ and $\left( 9\right) $ above) 
\begin{equation}
\alpha _{p}\left( y\right) =\dfrac{A_{p}}{A_{p-1}}y^{p}+\sum%
\limits_{q=p+1}^{\infty }u_{p}\left( q\right) y^{q}\text{.}
\end{equation}
Thus the zeros of $B_{y}\left( P\right) $ considered as a polynomial in $x$
on the field $\mathbb{K}\left( \left( y\right) \right) $, can be written as
(see equation $\left( 7\right) $) 
\begin{equation}
x_{p}\left( y\right) =-\left( \alpha _{p}\left( y\right) \right) ^{-1}=-%
\dfrac{A_{p-1}}{A_{p}y^{p}}\left( 1+\sum\limits_{q=1}^{\infty }w_{p}\left(
q\right) y^{q}\right) 
\end{equation}
where $w_{p}\left( q\right) \in \mathbb{K}$. It follows that the polynomial $%
B_{y}\left( P\right) $ is separate over $\mathbb{K}\left( \left( y\right)
\right) $ and that its zeros are all distinct because they have different
valuations. This proves that the discriminant $\Delta \left( y\right) $ of $%
B_{y}\left( P\right) $ is a polynomial in $y$ not identically zero.

$ii)$ First, we assume $\rho _{o}^{2}\Omega ^{-1}<1$, otherwise Corollary 18
of $\left[ 3\right] $ shows that $P$ has only real roots, without
requirement of simplicity of the roots. We know by the theorem 16 of $\left[
3\right] $, that the series $\alpha _{p}\left( y\right) $ are absolutely
convergent in the disk $D\left( 0,\rho _{o}^{2}\Omega ^{-1}\right)
=\;\left\{ y\in \mathbb{C}\text{ : }\left| y\right| <\rho _{o}^{2}\Omega
^{-1}.\right\} $

Thus, they define holomorphic functions in this disk. Since $A_{n}y^{n\left(
n+1\right) /2}$ does not vanish outside $0$, it is the same for the
functions $\alpha _{p}\left( y\right) $. Therefore the $n$ zeros $%
x_{p}\left( y\right) =$ $-1/\alpha _{p}\left( y\right) $ of $B_{y}\left(
P\right) $ are also holomorphic functions in the punctured disk $D^{\ast
}\left( 0,\rho _{o}^{2}\Omega ^{-1}\right) =\left\{ y\in \mathbb{C}\text{ : }%
0<\left| y\right| <\rho _{o}^{2}\Omega ^{-1}.\right\} $. And their Laurent
series at the point $0$, given by $\left( 24\right) $, converge on this
punctured disk. It is well known that the roots of $Q\left( x,y\right)
=B_{y}\left( P\right) \left( x\right) =0$ are the branches of one or several
algebraic functions, that would be obtained by decomposing $Q$ into
irreductible factors in the factorial ring $\mathbb{C}\left[ x,y\right] .$
The following decomposition shows that the case of several algebraic
functions cannot be excluded. 
\begin{equation*}
Q\left( x,y\right) =B_{y}\left( 1+x+x^{2}+x^{3}\right)
=1+xy+x^{2}y^{3}+x^{3}y^{6}=\left[ 1+xy^{2}\right] \left[ 1+x\left(
y-y^{2}\right) +x^{2}y^{4}\right] 
\end{equation*}

However, we will follow the reasoning of Section 12.1 of $\left[ 11\right] $%
, which deals with the case of a single algebraic function. Indeed the
reasoning of Section 12.1 of $\left[ 11\right] $\ does not depend, for the
part that concern us, to the fact that $Q$ is irreductible or not.

For a fixed value $y\in \mathbb{C}\backslash S$ ($S$ is a set defined
below), the equation 
\begin{equation}
Q\left( x,y\right) =\sum_{k=0}^{n}A_{k}x^{k}y^{k\left( k+1\right) 2}=0
\end{equation}
has $n$ distinct finite roots. The set $S$\ of exceptional values is
composed of three subsets. The first of these is the subset of roots of $%
A_{n}y^{n\left( n+1\right) /2}=0$, ie $\left\{ 0\right\} $ ( the Laurent
series $\left( 24\right) $ shows that $y=0$ is actually a pole of order $p$
for $x_{p}\left( y\right) $). Secondly, we must exclude the values of $y$
for which $\left( 25\right) $ has multiple roots. It is well known that
these values of $y$ are the roots of the discriminant $\Delta \left(
y\right) $. We know from the point $i)$ of Lemma 5.2, that $\Delta \left(
y\right) $ is not identically nul, so this second subset is finite. Third,
we must exclude the point at infinity. $S$ is thus finite and the hypothesis
of the point $ii)$ of Lemma 5.2 shows that $S\cap \left[ \rho _{o}^{2}\Omega
^{-1},1\right[ =\emptyset $. Let $d$ be the distance between $S\backslash 
\mathbb{R}$ and $\mathbb{R}$ (we set $d=+\infty $ if $\ S\backslash \mathbb{R%
}=+\infty $). Put $b=\max \left\{ y\in S\cap \mathbb{R}\text{ : }y<\rho
_{o}^{2}\Omega ^{-1}\right\} $; we have $b\geq 0$ since $0\in S$. And
consider the real number $a=\left( b+\rho _{o}^{2}\Omega ^{-1}\right) /2$,
we have $a\in D^{\ast }\left( 0,\rho _{o}^{2}\Omega ^{-1}\right) $. Put $\mu
=1/2\min \left\{ d,a-b\right\} $, we have $\mu >0$. For each $p\in \left\{
1,2,...,n\right\} $, the function $x_{p}\left( y\right) $ holomorphic in $%
D^{\ast }\left( 0,\rho _{o}^{2}\Omega ^{-1}\right) $, can be expanded in a
Taylor series around the point $a$, which converges\ in a disk $\left|
y-a\right| <\nu _{p}$ ($\nu _{p}>0)$ . Put $\varepsilon =\min \left\{ \mu
,\nu _{1},\nu _{2},...,\nu _{n}\right\} $, we have $\varepsilon >0$.

For $y_{0}\in $ $\left[ \rho _{o}^{2}\Omega ^{-1},1\right] $, $\overline{T}%
\left( y_{0}\right) $ denotes the closed rectangular subset of the complex
plane, whose vertices are the complex numbers $a-\varepsilon +i\varepsilon $%
, $a-\varepsilon +i\varepsilon $, $y_{0}+i\varepsilon $, $y_{0}-i\varepsilon 
$. $T\left( y_{0}\right) $ denotes the interior of $\overline{T}\left(
y_{0}\right) $. Let $y_{0}$ be a fixed positive number in $\left] \rho
_{o}^{2}\Omega ^{-1},1\right[ $, the distance between $\overline{T}\left(
y_{0}\right) $ and $S$ is a positive number, $\delta \left( y_{0}\right) >0$%
. For $p\in \left\{ 1,2,...,n\right\} $, the Taylor series of $x_{p}\left(
y\right) $ around the point $a$ is convergent in the disk $\left| y-a\right|
<\varepsilon $, which is included in $T\left( y_{0}\right) $ for all $%
y_{0}\in \left] \rho _{o}^{2}\Omega ^{-1},1\right[ $. Following the
reasoning of section 12.1 of $\left[ 11\right] $ for the open subset $%
T\left( y_{0}\right) $, the implicit function theorem shows that each of
these Taylor series can be continued analytically along every path in $%
T\left( y_{0}\right) $. Since $T\left( y_{0}\right) $ is a simply-connected
domain, the monodromy theorem (see $\left[ 11\right] $ p.12) shows that
these analytic continuations provide for each $p\in \left\{
1,2,...,n\right\} $, a holomorphic function in $T\left( y_{0}\right) $,
which coincides respectively with $x_{p}\left( y\right) $ in the disk $%
\left| y-a\right| <\varepsilon .$ It follows from the principle of analytic
continuation, that we have for each $p\in \left\{ 1,2,...,n\right\} $ an
analytic continuation of $x_{p}\left( y\right) $, which is now defined and
holomorphic in $D^{\ast }\left( 0,\rho _{o}^{2}\Omega ^{-1}\right) \cup
T\left( y_{0}\right) .$

If we do this for all $y_{0}\in \left] \rho _{o}^{2}\Omega ^{-1},1\right[ $,
we get for each $p\in \left\{ 1,2,...,n\right\} $ an analytic continuation
of $x_{p}\left( y\right) $, that we denote $\widehat{x_{p}}\left( y\right) $%
, holomorphic in the open subset 
\begin{equation*}
O=D^{\ast }\left( 0,\rho _{o}^{2}\Omega ^{-1}\right) \cup T\left( 1\right)
=\bigcup_{y_{0}\in \left] \rho _{o}^{2}\Omega ^{-1},1\right[ }D^{\ast
}\left( 0,\rho _{o}^{2}\Omega ^{-1}\right) \cup T\left( y_{0}\right) \text{.}
\end{equation*}
Note that $\left] 0,1\right[ \subset $ $O$. By construction, for $p\in
\left\{ 1,2,...,n\right\} $ and $y\in O$, $\widehat{x_{p}}\left( y\right) $
is a zero of $B_{y}\left( P\right) $. In addition, since $Q\left( 0,y\right)
=A_{0}\neq 0$ we have $\widehat{x_{p}}\left( y\right) \neq 0$ in $O$ for
each $p\in \left\{ 1,2,...,n\right\} $. Hence, the law of permanence of
functional equations (see section 10.7 of $\left[ 11\right] )$ shows that
the relation 
\begin{equation*}
Q\left( x,y\right) =A_{0}\prod\limits_{p=1}^{n}\left( 1-\dfrac{x}{\widehat{%
x_{p}}\left( y\right) }\right) =0
\end{equation*}
is still valid for $x\in \mathbb{C}$ and $y\in $ $O$. It follows that the $n$
values $\widehat{x_{p}}\left( y\right) $ are \textbf{exactly} the roots of $%
B_{y}\left( P\right) $ for $y\in O$.

It is clear that the domain $O$ is symmetrical about the real axis. By
assumption, the coefficients $A_{k}$ are real, thus $\alpha _{p}\left(
y\right) $ and $x_{p}\left( y\right) =\left( \alpha _{p}\left( y\right)
\right) ^{-1}$ are also real on the nonempty open interval $\left] 0,\rho
_{o}^{2}\Omega ^{-1}\right[ $, where they coincide with, respectively, $%
\widehat{\alpha _{p}}\left( y\right) $ and $\widehat{x_{p}}\left( y\right) $%
. Hence, applying the Schwarz reflection principle to $O$ and each
holomorphic function $\widehat{x_{p}}\left( y\right) $, we find that $%
\widehat{x_{p}}\left( y\right) $ is a real number for\ $y\in \left] 0,1%
\right[ $ and $1\leq p\leq n$. In fact,we have just shown that the zeros of $%
B_{y}\left( P\right) $ are all real for $y\in \left] 0,1\right[ $ . The
proposition 2.1 then allows to conclude.
\end{proof}

\begin{lemma}
\bigskip\ Let $f\left( x\right) =\sum_{n\geq 0}A_{n}x^{n}$ be an entire
function and for $n\in \mathbb{N}$, let $J_{n}$ be the $n$-th Jensen
polynomial of $f$.

$i)$ If $\ $the coefficients $A_{n}$\ are real, different from zero and
satisfy the condition $\Omega <+\infty $, we have 
\begin{equation*}
f\text{ is }L_{2}\text{ (and therefore its zeros are real)}%
\Longleftrightarrow \left\{ 
\begin{array}{c}
\text{the polynomials }B_{y}\left( J_{n}\right) \text{ } \\ 
\text{have only simple zeros for }y\in \left[ \rho _{o}^{2}\Omega ^{-1},1%
\right[ 
\end{array}
\right\} 
\end{equation*}

$ii)$ If the coefficients $A_{n}$\ are strictly positive, we have 
\begin{equation*}
f\text{ is }L_{1}\text{ (and therefore its zeros are negative)}%
\Longleftrightarrow \left\{ 
\begin{array}{c}
\Omega <+\infty \text{ \ and the polynomials }B_{y}\left( J_{n}\right) \text{
} \\ 
\text{have only simple zeros for }y\in \left[ 1/4\Omega ,1\right[ 
\end{array}
\right\} 
\end{equation*}
\end{lemma}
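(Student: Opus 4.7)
The plan is to interleave Theorem 4.1 (stability of $L_{2}$ under $B_{y}$), Lemma 4.2 (order zero of $B_{y}(f)$ for $|y|<1$), Lemma 4.5 ($L_{1}$ regime for small $y$), Polya's addendum in Theorem 3.3, and Lemma 5.2 $ii)$, using throughout the observation (read off from (16)) that the $n$-th Jensen polynomial of $B_{y}(f)$ coincides with $B_{y}(J_{n})$. For the forward direction of $i)$, if $f$ is $L_{2}$ then Theorem 4.1 $i)$ yields that $B_{y}(f)$ is $L_{2}$ for every $y\in [0,1]$, and Lemma 4.2 shows it has order $0$ whenever $|y|<1$; in particular $B_{y}(f)$ is never of the form $P(x)e^{\alpha x}$ with $\alpha\neq 0$, so Polya's half of Theorem 3.3 forces $B_{y}(J_{n})$ to have only simple real zeros on the whole of $(0,1)$, which contains $[\rho_{o}^{2}\Omega^{-1},1[$.

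For the reverse direction of $i)$, I would apply Lemma 5.2 $ii)$ to each $P=J_{n}$. A short factorial computation on $B_{k}=\frac{n!}{(n-k)!}A_{k}$ shows that the ratios $\Omega_{k}^{(n)}$ attached to $J_{n}$ satisfy $\Omega_{k}^{(n)}=\Omega_{k}\,(n-k)/(n-k+1)$, hence $\Omega^{(n)}\leq \Omega$ and $[\rho_{o}^{2}(\Omega^{(n)})^{-1},1[\,\subset\,[\rho_{o}^{2}\Omega^{-1},1[$. The simple-zero hypothesis therefore transfers to the interval demanded by Lemma 5.2 $ii)$, which yields that every $J_{n}$ has only real roots; the Jensen half of Theorem 3.3 then concludes that $f$ is $L_{2}$.

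Part $ii)$ reduces to part $i)$ plus one extension. For the forward direction, $f\in L_{1}$ gives $\Omega\leq 1$ by the Turan inequality of Proposition 3.1 and $f\in L_{2}$, so $i)$ applies, and $\rho_{o}^{2}<1/4$ yields the inclusion $[1/(4\Omega),1[\,\subset\,[\rho_{o}^{2}\Omega^{-1},1[$, transferring the simple-zero property. For the reverse direction, Lemma 4.5 produces $B_{y}(f)\in L_{1}$ for $y\in [0,1/(4\Omega)]$, so combined with Lemma 4.2 and Polya we obtain simple zeros of $B_{y}(J_{n})$ on $(0,1/(4\Omega)]$; gluing with the hypothesis yields simple zeros on all of $(0,1)$, and part $i)$ then delivers $f\in L_{2}$, which positivity of the $A_{n}$ promotes to $L_{1}$ via Proposition 3.1.

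The main obstacle I anticipate is purely bookkeeping: tracking which $y$-interval is in play at each step, checking the identity $\Omega^{(n)}\leq\Omega$ so that the hypothesis of Lemma 5.2 $ii)$ is inherited by every Jensen polynomial, and verifying that $B_{y}(f)$ escapes the Polya exception $P(x)e^{\alpha x}$ with $\alpha\neq 0$ via the order-zero statement of Lemma 4.2. Once these routine verifications are secured, the proof becomes an orchestration of results already assembled earlier in the paper.
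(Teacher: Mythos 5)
Your proposal is correct and follows essentially the same route as the paper's own proof: the forward directions via Theorem 4.1, Lemma 4.2 (order zero, excluding the Polya exception $P(x)e^{\alpha x}$) and the Polya part of Theorem 3.3 applied to the Jensen polynomials $B_{y}(J_{n})$ of $B_{y}(f)$; the reverse direction of $i)$ via the computation $\Omega^{(n)}\leq\Omega$ and Lemma 5.2 $ii)$ applied to each $J_{n}$; and part $ii)$ by the same gluing of Lemma 4.5 on $\left[0,1/4\Omega\right]$ with the hypothesis on $\left[1/4\Omega,1\right[$, then invoking $i)$ and Proposition 3.1. The only cosmetic differences are that the paper treats the degenerate cases $\rho_{o}^{2}\Omega^{-1}\geq 1$ and $1/4\Omega\geq 1$ explicitly (in your version they are absorbed into Lemma 5.2 $ii)$ and Lemma 4.5), and the commutation of $B_{y}$ with Jensen polynomials is the paper's equation $(22)$ rather than $(16)$.
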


\begin{proof}
\bigskip $i)$ Suppose first that for all $n\in \mathbb{N}$, $B_{y}\left(
J_{n}\right) $ has only simple zeros for $y\in \left[ \rho _{o}^{2}\Omega
^{-1},1\right[ $. As before, we assume $\rho _{o}^{2}\Omega ^{-1}<1$,
otherwise Corollary 18 of $\left[ 3\right] $ and Lemma 4.2 above, show that $%
B_{y}\left( f\right) $ is $L_{2}$ for $y\in \left[ 0,1\right[ $ ; and we
deduce from Proposition 2.1 and the last point of Proposition 3.1 that $f$
is $L_{2}$ without requirement of simplicity of zeros. We have 
\begin{equation*}
J_{n}\left( x\right) =\sum\limits_{k=0}^{n}C_{n,k}x^{k}\text{ with }%
C_{n},_{k}=\dfrac{n!}{\left( n-k\right) !}A_{k}\in \mathbb{R}^{\ast }
\end{equation*}
and for $1\leq k\leq n-1$%
\begin{equation*}
\left| \dfrac{C_{n,k-1}C_{n,k+1}}{C_{k}^{2}}\right| =\dfrac{n-k}{n-k+1}%
\left| \dfrac{A_{k-1}A_{k+1}}{A_{k}^{2}}\right| \leq \dfrac{n-1}{n}\left|
\Omega _{k}\right| 
\end{equation*}
thus 
\begin{equation*}
\sup_{1\leq k\leq n-1}\left| \dfrac{C_{n,k-1}C_{n,k+1}}{C_{n,k}^{2}}\right|
\leq \Omega =\sup_{k\geq 1}\left| \Omega _{k}\right| 
\end{equation*}
We then apply Lemma 5.2 to $J_{n}$, which shows that this polynomial has
only real zeros. Since this is true for any $n\in \mathbb{N}$, it follows
from Theorem 3.3 that $f$ is $L_{2}$.

Conversely, if we assume that $f$ is $L_{2}$, we know from Theorem 4.1\ that 
$B_{y}\left( f\right) $ is $L_{2}$ for all $y\in \left[ 0,1\right[ $.
Furthermore, Lemma 4.2 shows that the order of $B_{y}\left( f\right) $ is
zero in this interval. Thus $B_{y}\left( f\right) $ is not of the form $%
P\left( x\right) e^{\alpha x}$ where $P$ is a polynomial and $\alpha $ a
nonzero real number. It follows from the last point of Theorem 3.3 that the
Jensen polynomials of $B_{y}\left( f\right) $ have only simple zeros for $%
y\in \left[ 0,1\right[ $ and in particular for $\left[ \rho _{o}^{2}\Omega
^{-1},1\right[ $.

$ii)$ Assume first that $\Omega <+\infty $ ( we assume $1/4\Omega <1$,
otherwise Lemma 4.5 shows that $f$ is $L_{1}$ without requirement of
simplicity of zeros) and that for all $n\in \mathbb{N}$ the polynomial $%
B_{y}\left( J_{n}\right) $ has only simple zeros for $y\in \left[ 1/4\Omega
,1\right[ $. We know by Lemma 4.5 that $B_{y}\left( f\right) $\ \ is $L_{1}$%
, thus also $L_{2}$ for $y\in $\ $\left[ 0,1/4\Omega \right] $ and moreover
it is zero order. It is thus not of the form $P\left( x\right) e^{\alpha x}$
where $P$ is a polynomial and $\alpha $ a nonzero real number. And the zeros
of $B_{y}\left( J_{n}\right) $ are then also simple for $y\in \left[
0,1/4\Omega \right] $ by Theorem 3.3. Hence, the zeros of $B_{y}\left(
f\right) $ are simple for $y\in \left[ 0,1\right[ $ and we can apply the
results of $i)$, which shows that $f\ $is $L_{2}$. It follows from
Proposition 3.1\ that $f$ is also $L_{1}$. We deduce the converse from $i)$
and the fact that $\Omega \leq 1<+\infty $ by inequality $\left( 17\right) $.
\end{proof}

\begin{proof}[Proof of Theorem 5.1]
let $f^{\flat }$\bigskip\ be the function defined by $\left( 3\right) $. We
already now that The Riemann hypothesis is equivalent to that $f^{\flat }$
is $L_{1}$. We notice now that the polynomials 
\begin{equation*}
B_{y}\left( J_{n}^{\flat }\right) \left( x\right) =\sum_{k=0}^{n}\dfrac{n!}{%
\left( n-k\right) !}A_{k}^{\flat }x^{k}y^{k\left( k+1\right) 2}
\end{equation*}
are the $y$-Borel transform of the Jensen polynomials of $f^{\flat }$. We
then use the $ii)$ of Lemma 5.3, taking into account the fact that we have $%
\Omega =1$ for $f^{\flat }$ (see Section 2.3). Hence RH is true if and only
if $B_{y}\left( f^{\flat }\right) $ has only simple zeros for $y\in \left[
1/4,1\right[ $ and $n\in \mathbb{N}$. This is obvious for $n=0$ and $n=1$,
and for $n\geq 2$ this amounts to saying that $\Delta _{n}\left( y\right) $
does not vanish in this interval.
\end{proof}

\begin{remark}
\bigskip It is well known that $\Delta _{n}$ $\left( y\right) $ can be
expressed as a determinant. Theorem 5.1 can then be compared to a condition
equivalent to RH, given in Section C.8 of $\left[ 17\right] $, and using a
different sequence of determinants.
\end{remark}

\bigskip\ 

\section{\protect\bigskip Simplicity of the zeros of $B_{y}\left( f^{\flat
}\right) $}

A new necessary condition for the RH, follows from the following general
theorem.

\begin{theorem}
If $f$ is $L_{2}$ with $f\left( 0\right) \neq 0$, then the zeros of $%
B_{y}\left( f\right) $ are simple (and real) for $y\in \left] 0,1\right[ $.
\end{theorem}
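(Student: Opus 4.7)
The reality of the zeros of $B_y(f)$ on $(0,1)$ is immediate from Theorem 4.1(i): since $f$ is $L_2$, so is $B_y(f)$ throughout $[0,1]$. The substantive content of the theorem is therefore the simplicity claim. I would set up the analysis by writing $B_y(f)(x)=\sum_n A_n\,\varphi(n)\,x^n$ with the multiplier
\[
\varphi(t)\;=\;y^{t(t+1)/2}\;=\;\exp\!\Bigl(\tfrac{\ln y}{2}\,t^2+\tfrac{\ln y}{2}\,t\Bigr),
\]
which, for $y\in(0,1)$, is an $L_2$ function of the form $e^{-at^2+bt}$ with $a>0$: a \emph{strict} Gaussian multiplier (nontrivial quadratic factor in its Hadamard decomposition). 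Lemma 4.2 moreover ensures that $B_y(f)$ is entire of order zero, so in particular not of the exceptional form $P(x)e^{\alpha x}$ with $\alpha\ne 0$ appearing in Theorem 3.3.

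The plan is then to apply a Polya-style strengthening of Theorem 3.2: if $f\in L_2$ with $f(0)\ne 0$ and $\varphi\in L_2$ has a strict Gaussian factor $(a>0)$, then $\sum_n A_n\,\varphi(n)\,x^n$ has only \emph{simple} real zeros. This refinement plays at the Laguerre level the role that the final statement of Theorem 3.3 plays at the Jensen level, and it captures the separating action of a Gaussian multiplier; this separation is already visible in the prototype $B_y\bigl((1-x)^2\bigr)(x)=1-2xy+x^2y^3$, whose discriminant $4y^2(1-y)$ is strictly positive on $(0,1)$, so that the double zero of $(1-x)^2$ at $y=1$ splits into two simple zeros for every $y<1$.

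The main obstacle will be establishing this refined Laguerre--Polya result rigorously. Note in particular that it cannot be reduced to mere simplicity of the Jensen polynomials of $B_y(f)$ (which does follow from Theorem 3.3 and Lemma 4.2), since the Jensen polynomials of an $L_2$ function of order zero can be simple while the function itself is not, as $(1-x)^2$ itself illustrates. A self-contained fallback would treat $F(x,y)=B_y(f)(x)$ as holomorphic on $\mathbb{C}\times D(0,1)$ (which it is, by Lemma 4.2 combined with the joint normal convergence of Proposition 2.1) and argue by contradiction: a hypothetical zero of $F(\cdot,y_0)$ of order $\ge 2$ at some $(x_0,y_0)\in\mathbb{R}\times(0,1)$ would, by Weierstrass preparation, give $F=P\cdot u$ locally with $P$ a Weierstrass polynomial of degree $\ge 2$ in $x-x_0$; the reality of all zeros of $B_y(f)$ for $y$ real near $y_0$ (Theorem 4.1(i)) would force the discriminant of $P$ to be nonnegative on a real neighborhood of $y_0$ and to vanish at $y_0$, yielding two coalescing real-analytic branches whose backward continuation along real $y$ into the convergence region of the power series $x_p(y)=-1/\alpha_p(y)$ (Theorem 2.2 and its general Laguerre analogue) would have to match two distinct $x_p$'s of different pole orders at $y=0$, producing the contradiction.
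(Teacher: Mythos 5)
There is a genuine gap in both routes you offer. Your primary route simply postulates the key fact (``a strict Gaussian $L_2$ multiplier applied to an $L_2$ function with $f(0)\neq 0$ produces only simple zeros'') and acknowledges you do not know how to prove it; but that postulate \emph{is} the theorem, so nothing has been proved. Your fallback route breaks down at the final step. Reality of the zeros of $B_{y}\left( f\right) $ for real $y$ near $y_{0}$ forces the discriminant of the Weierstrass polynomial to be nonnegative and to vanish at $y_{0}$, but this is \emph{not} contradictory: two real-analytic branches may simply cross, as in $F\left( x,y\right) =x^{2}-\left( y-y_{0}\right) ^{2}$, whose discriminant is nonnegative and vanishes at $y_{0}$ while all zeros stay real. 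Likewise, tracing the two coalescing branches backward to the region $\left| y\right| <\rho _{o}^{2}\Omega ^{-1}$ and matching them with two distinct series $x_{p}\left( y\right) =-1/\alpha _{p}\left( y\right) $ of different pole orders at $y=0$ is not absurd --- that is exactly what a collision of two zeros at $y_{0}$ would look like (the paper's Lemma 5.2 only shows the discriminant is not identically zero, not that it has no real zeros; indeed Theorem 5.1 shows that non-vanishing on $\left[ 1/4,1\right[ $ is equivalent to RH, hence certainly not automatic). Moreover, for transcendental $f$ the backward continuation along real $y$ can be obstructed by other exceptional values of $y$ and by zeros escaping to infinity, precisely the difficulty the paper itself points out when discussing the converse conjecture at the end of Section 6.

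What is missing is a local computation exploiting the specific kernel $y^{n\left( n+1\right) /2}$, which is how the paper proceeds. One first reduces to a zero of order exactly two: if $a$ is a zero of $B_{b}\left( f\right) $ of order $m\geq 2$, the identity $B_{y}\left( f^{\left[ l\right] }\right) \left( x\right) =y^{-l\left( l+1\right) /2}\left( B_{y}\left( f\right) \right) ^{\left[ l\right] }\left( x/y^{l}\right) $ shows that $g=f^{\left[ m-2\right] }$ (still $L_{2}$ by closure under derivation, Proposition 3.1) has $B_{b}\left( g\right) $ with a double zero at $c=ab^{m-2}\neq 0$. Then the crucial point (Lemma 6.3) is that for $Q\left( x,y\right) =\sum A_{n}x^{n}y^{n\left( n+1\right) /2}$ the conditions $Q=\partial Q/\partial x=0$, $\partial ^{2}Q/\partial x^{2}\neq 0$ at $\left( c,b\right) $ force $\dfrac{\partial Q}{\partial y}\left( c,b\right) =\dfrac{c^{2}}{b}\dfrac{\partial ^{2}Q}{\partial x^{2}}\left( c,b\right) \neq 0$, so the Newton polygon is the segment from $\left( 0,2\right) $ to $\left( 1,0\right) $ and the Puiseux branch has leading term $\left( -c^{2}h/b\right) ^{1/2}$; since $c$ is real and $b>0$, the zeros become non-real for $y=b+h$ with $h>0$ small, contradicting Theorem 4.1 $i)$ applied to $g$. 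It is this sign-definite square-root splitting --- the analogue of your heat-flow intuition, but proved from the structure of the $y$-Borel transform --- that your proposal lacks; without it, reality considerations alone cannot exclude a double zero.
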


\bigskip

\begin{corollary}
\bigskip \bigskip We can improve Theorem 4.3 by replacing the point $i)$ of
this theorem by:

$i^{\ast })$ If $0\leq y\leq \Theta $, $B_{y}\left( f^{\flat }\right) $ is $%
L_{1}$ and thus, has only negative zeros. Furthermore, the zeros are simple
if $0\leq y<\Theta $.

In particular, the Riemann hypothesis implies that the zeros of $B_{y}\left(
f^{\flat }\right) $ are simple for $y\in \left] 0,1\right[ $.
\end{corollary}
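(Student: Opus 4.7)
The first sentence of $i^{\ast })$ is exactly point $i)$ of Theorem 4.3, so no fresh work is required there. The new content of the corollary is the simplicity of the zeros on $\left[ 0,\Theta \right[ $, and the plan is to derive it from Theorem 6.1 applied to a well-chosen intermediate $y$-Borel transform, together with the semigroup identity $\left( 6\right) $.

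Fix $y\in \left] 0,\Theta \right[ $ and choose any $y_{0}$ with $y<y_{0}\leq \Theta $. Set $g=B_{y_{0}}\left( f^{\flat }\right) $. By point $i)$ of Theorem 4.3 the function $g$ is $L_{1}$, hence also $L_{2}$ by Proposition 3.1, and $g\left( 0\right) =A_{0}^{\flat }>0$, so the hypotheses of Theorem 6.1 are satisfied by $g$. Its conclusion gives that the zeros of $B_{t}\left( g\right) $ are simple for every $t\in \left] 0,1\right[ $. Taking $t=y/y_{0}\in \left] 0,1\right[ $, the identity $\left( 6\right) $ yields $B_{y/y_{0}}\left( g\right) =B_{y}\left( f^{\flat }\right) $, so the zeros of $B_{y}\left( f^{\flat }\right) $ are simple. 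The case $y=0$ is vacuous since $B_{0}\left( f^{\flat }\right) \left( x\right) =A_{0}^{\flat }$ has no zero.

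For the ``in particular'' assertion, assume the Riemann hypothesis. Point $iv)$ of Theorem 4.3 then gives $\Theta =1$, so the interval $\left[ 0,\Theta \right[ $ equals $\left[ 0,1\right[ $ and the previous paragraph shows that the zeros of $B_{y}\left( f^{\flat }\right) $ are simple throughout $\left] 0,1\right[ $.

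The corollary itself is only a short transfer; the whole technical weight lies inside Theorem 6.1, which I am allowed to invoke. The hard part there, which is not addressed by the corollary, is the passage from the reality of the zeros of $B_{y}\left( f\right) $ for $L_{2}$ data (granted by Theorem 4.1) to their \emph{simplicity} for every $y\in \left] 0,1\right[ $. The natural route is to combine the order-zero estimate of Lemma 4.2, the Polya sharpening of Jensen's theorem recalled in Theorem 3.3, and a deformation argument that forbids collisions of zeros as $y$ varies, very much in the spirit of the Jensen-polynomial criterion used in Lemma 5.3.
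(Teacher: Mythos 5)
Your proof is correct and follows essentially the same route as the paper: the paper applies Theorem 6.1 to $B_{\Theta }\left( f^{\flat }\right) $ (your argument with an arbitrary $y_{0}\in \left] y,\Theta \right] $ is the same step, just stated slightly more flexibly) and then uses the semigroup identity $\left( 6\right) $ together with points $i)$ and $iv)$ of Theorem 4.3. The closing speculation about how Theorem 6.1 itself is proved is immaterial here, since the corollary only invokes that theorem.
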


\begin{proof}
The first part of the point $i^{\ast })$ is the point $i)$ of Theorem 4.3,
we have already proved. It then suffices to apply Theorem 6.1 to $B_{\Theta
}\left( f^{\flat }\right) $, to deduce that the zeros of $B_{y}\left(
f^{\flat }\right) $ are simple for $0\leq y<\Theta $. Further, RH implies
that $\Theta =1$ (see $iv)$ of Theorem 4.3), and the final implication
follows from $i^{\ast })$.
\end{proof}

We need the following lemma to prove Theorem 6.1

\begin{lemma}
\bigskip Let $Q\left( x,y\right) =\sum_{n=0}^{N}A_{n}x^{n}y^{n\left(
n+1\right) /2}$ with $A_{n}\in \mathbb{C}$ and $N$ a natural number greater
than $1$ or $N=$ $+\infty $. If $N=+\infty $, assume that the power series
in two variables $x$ and $y$ converges in a neighborhood of $\left(
a,b\right) \in \mathbb{C}^{2}$ with $a$ and $b$ different from zero. Assume
further that 
\begin{equation*}
\text{ }Q\left( a,b\right) =0\text{, \ \ }\dfrac{\partial Q}{\partial x}%
\left( a,b\right) =0\text{, \ \ }\dfrac{\partial ^{2}Q}{\partial x^{2}}%
\left( a,b\right) \neq 0\text{.}
\end{equation*}
Then, there is a convergent Puiseux series, $\psi \left( h\right) $,
satisfying $Q\left( a+\psi \left( h\right) ,b+h\right) $\bigskip $=0$, and
whose the first term is 
\begin{equation}
\left( -\dfrac{a^{2}}{b}h\right) ^{1/2}
\end{equation}
\end{lemma}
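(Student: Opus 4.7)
The plan is to derive a structural partial differential equation satisfied by $Q$ from the specific shape of the exponents $n(n+1)/2$, use it to rewrite $Q_y(a,b)$ in terms of $Q_{xx}(a,b)$, and then apply the holomorphic implicit function theorem after a ramified substitution in $h$.

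First, I would observe the identity
\begin{equation*}
y\,Q_y \;=\; x\,Q_x + \tfrac{1}{2}\,x^2 Q_{xx},
\end{equation*}
which holds termwise on each monomial $x^n y^{n(n+1)/2}$ since $\tfrac{n(n+1)}{2} = n + \tfrac{n(n-1)}{2}$, and hence throughout any domain where $Q$ is defined (the series converging around $(a,b)$ when $N=+\infty$, or identically when $N$ is finite). Evaluating at $(a,b)$ and using the hypothesis $Q_x(a,b)=0$ together with $b\neq 0$, this yields
\begin{equation*}
Q_y(a,b) \;=\; \frac{a^2}{2b}\,Q_{xx}(a,b) \;\neq\; 0.
\end{equation*}

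Next, I would Taylor expand $F(u,h):=Q(a+u,b+h)$ about $(0,0)$. Since $F(0,0)=F_u(0,0)=0$, we may write
\begin{equation*}
F(u,h) \;=\; Q_y(a,b)\,h + \tfrac{1}{2}\,Q_{xx}(a,b)\,u^2 + R(u,h),
\end{equation*}
with $R$ analytic near $(0,0)$ and of total order $\geq 3$ in $(u,h)$. Substituting the ramified coordinates $h=t^2$, $u=tv$, every monomial of $R$ contributes a factor $t^k$ with $k\geq 3$, so after dividing by $t^2$ one obtains an analytic function
\begin{equation*}
G(v,t) \;=\; Q_y(a,b) + \tfrac{1}{2}\,Q_{xx}(a,b)\,v^2 + t\,\widetilde{R}(v,t),
\end{equation*}
on a full bidisk neighborhood of any prescribed $(v_0,0)\in \mathbb{C}^2$.

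Finally, I would choose $v_0$ to be either square root of $-a^2/b$. Using the identity from the first step, a direct substitution gives $G(v_0,0)=0$ and $\partial_v G(v_0,0)=Q_{xx}(a,b)\,v_0\neq 0$, so the holomorphic implicit function theorem produces a unique analytic branch $t\mapsto v(t)$ with $v(0)=v_0$ and $G(v(t),t)\equiv 0$ in a neighborhood of $0$. Setting $\psi(h)=t\,v(t)$ with $t=h^{1/2}$ then yields a convergent Puiseux series satisfying $Q(a+\psi(h),b+h)=t^2\,G(v(t),t)=0$, with leading term $v_0\,t = (-a^2/b)^{1/2} h^{1/2} = (-a^2 h/b)^{1/2}$, as required. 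The one nonroutine step is the structural PDE: without it the leading coefficient would read $-2Q_y(a,b)/Q_{xx}(a,b)$, a ratio of global quantities depending on all the $A_n$, and the shape $n(n+1)/2$ of the exponents is precisely what collapses this ratio to the explicit value $-a^2/b$.
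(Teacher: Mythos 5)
Your proof is correct, and it reaches the conclusion by a genuinely different route than the paper. The paper Taylor-expands $T(h,k)=Q(a+k,b+h)$, shows by explicit manipulation of the coefficient sums (eliminating $A_{1}$ via $a_{0,1}=0$) that $a_{1,0}=\tfrac{a^{2}}{b}a_{0,2}\neq 0$, reads off that the Newton polygon is the segment from $(0,2)$ to $(1,0)$, and then invokes the Puiseux theorem together with the quasi-homogeneous initial form $\bigl(\tfrac{a^{2}}{b}h+k^{2}\bigr)a_{0,2}$ to get existence of $\psi$ and its first term. You obtain the same key relation $Q_{y}(a,b)=\tfrac{a^{2}}{2b}Q_{xx}(a,b)$ (identical to the paper's $a_{1,0}=\tfrac{a^{2}}{b}a_{0,2}$) far more cleanly, from the operator identity $yQ_{y}=xQ_{x}+\tfrac12 x^{2}Q_{xx}$ forced by the exponents $n(n+1)/2$; and instead of quoting Newton--Puiseux you perform the weighted substitution $h=t^{2}$, $u=tv$ and apply the holomorphic implicit function theorem to $G(v,t)$ at $(v_{0},0)$ with $v_{0}^{2}=-a^{2}/b$. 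This buys two things: it is self-contained (no appeal to Puiseux's theorem, whose standard statements for polynomials or Weierstrass polynomials would need a word of justification in the case $N=+\infty$), and it shows directly that $\psi$ is an ordinary power series in $h^{1/2}$ with the stated leading term, with uniqueness of the branch through $v_{0}$. One small inaccuracy: $R(u,h)$ is not of total order $\geq 3$ --- it contains the degree-two terms $Q_{xy}(a,b)\,uh$ and $\tfrac12 Q_{yy}(a,b)\,h^{2}$; the correct statement is that $R$ has weighted order $\geq 3$ for the weights $1$ on $u$ and $2$ on $h$. Since the claim you actually use, namely that every monomial of $R$ picks up a factor $t^{k}$ with $k\geq 3$ under the substitution, is true precisely for this weighted reason (those two terms become $t^{3}v$ and $t^{4}$), the argument stands as written once that phrase is corrected.
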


\begin{proof}
\bigskip Let $T\left( h,k\right) $ be the Taylor expansion of $Q\left(
x,y\right) $ about $\left( a,b\right) $, taken as 
\begin{equation*}
T\left( h,k\right) =Q\left( a+k,b+h\right) =\sum_{n=0}^{N\left( N+3\right)
/2}\sum_{i+j=n}a_{i,j}h^{i}k^{j}\text{ with }a_{i,j}=\dfrac{1}{i!j!}\dfrac{%
\partial ^{n}Q}{\partial y^{i}\partial x^{j}}\left( a,b\right) 
\end{equation*}
By assumption, we have 
\begin{equation*}
a_{0,0}=0,\;\;a_{0,1}=0,\;\;a_{0,2}\neq 0
\end{equation*}
thus 
\begin{equation*}
a_{0,1}=b\sum_{n=1}^{N}nA_{n}a^{n-1}b^{\left( n^{2}+n-2\right) /2}=0
\end{equation*}
and since $b\neq 0$%
\begin{equation}
A_{1}=-\sum_{n=2}^{N}nA_{n}a^{n-1}b^{\left( n^{2}+n-2\right) /2}\text{.}
\end{equation}
Then

\begin{equation*}
a_{1,0}=\dfrac{\partial Q}{\partial y}\left( a,b\right) =\sum_{n=1}^{N}%
\dfrac{n\left( n+1\right) }{2}A_{n}a^{n}b^{\left( n^{2}+n-2\right) /2}
\end{equation*}
or 
\begin{equation*}
a_{1,0}=aA_{1}+\sum_{n=2}^{N}\dfrac{n\left( n+1\right) }{2}%
A_{n}a^{n}b^{\left( n^{2}+n-2\right) /2}
\end{equation*}
and with $\left( 27\right) $%
\begin{equation*}
a_{1,0}=\sum_{n=2}^{N}\dfrac{n\left( n-1\right) }{2}A_{n}a^{n}b^{\left(
n^{2}+n-2\right) /2}\text{.}
\end{equation*}
In addition, we have 
\begin{equation*}
a_{0,2}=\dfrac{1}{2}\sum_{n=2}^{N}n\left( n-1\right) A_{n}a^{n-2}b^{n\left(
n+1\right) /2}\neq 0
\end{equation*}
and since $a\neq 0$, we get 
\begin{equation*}
a_{1,0}=\dfrac{a^{2}}{b}a_{0,2}\neq 0\text{.}
\end{equation*}
The Newton polygon is thus reduced in a $\left( h,k\right) $-plane, to the
segment that connects the point $\left( 0,2\right) $ to point $\left(
1,0\right) $. The Puiseux theorem asserts the existence of a convergent
Puiseux series $\psi \left( h\right) $, satisfying $T\left( h,\psi \left(
h\right) \right) =0$. Let us determine the first term of this series with
the method of Newton-Puiseux (see for exemple $\left[ 10\right] $ Chap. 7).
The quasi-homogeneous polynomial corresponding to the Newton polygon is 
\begin{equation*}
\widetilde{T}\left( h,k\right) =\left( \dfrac{a^{2}}{b}h+k^{2}\right) a_{0,2}
\end{equation*}
So, the parametrization is at the first approximation 
\begin{equation*}
\left\{ 
\begin{array}{c}
h=t^{2} \\ 
k=\lambda t
\end{array}
\right. 
\end{equation*}
where $\lambda $ is determined by the equation $\widetilde{T}\left(
h,k\right) =\left( \dfrac{a^{2}}{b}+\lambda ^{2}\right) t^{2}a_{0,2}=0$. It
follows $\lambda =\left( -a^{2}/b\right) ^{1/2}$, giving the first term $%
\left( 26\right) $.
\end{proof}

\bigskip

\begin{proof}[Proof of Theorem 6.1]
\bigskip We are going to show that there are no zeros of the function $%
B_{y}\left( f\right) $, which may have an order greater than one for $y\in %
\left] 0,1\right[ $. Let $m$ be a natural number superior or equal to $2$
and $f$ $\left( x\right) =\sum_{n=0}^{N}A_{n}x^{n}$ be a $L_{2}$ function.
Here, $N$ is an integer superior or equal to $m$ if $f$ is a polynomial of
degree $N$, or $N=+\infty $ if $f$ is transcendent. Assume the existence of $%
b\in \left] 0,1\right[ $ and $a\in \mathbb{R}$, such that $a$ is a zero of $%
B_{b}\left( f\right) $ with order $m$, ie 
\begin{equation*}
\begin{tabular}{lllll}
$B_{b}\left( f\right) \left( a\right) $ & $=$ & $Q\left( a,b\right) $ & $=$
& $0$ \\ 
$\left( B_{b}\left( f\right) \right) ^{\prime }\left( a\right) $ & $=$ & $%
\dfrac{\partial Q}{\partial x}\left( a,b\right) $ & $=$ & $0$ \\ 
$-----$ & $-$ & $------$ & $-$ & $-$ \\ 
$\left( B_{b}\left( f\right) \right) ^{\left[ m-1\right] }\left( a\right) $
& $=$ & $\dfrac{\partial ^{m-1}Q}{\partial x^{m-1}}\left( a,b\right) $ & $=$
& $0$ \\ 
$\left( B_{b}\left( f\right) \right) ^{\left[ m\right] }\left( a\right) $ & $%
=$ & $\dfrac{\partial ^{m}Q}{\partial x^{m}}\left( a,b\right) $ & $\neq $ & $%
0$%
\end{tabular}
\end{equation*}
Note that we have $a\neq 0$, otherwise $B_{b}\left( f\right) \left( 0\right)
=A_{0}b=0$ with $b\neq 0$, which implies $A_{0}=f\left( 0\right) =0$ in
contradiction with the hypothesis. An easy induction shows that we have
formally for all $l\in \mathbb{N}$ 
\begin{equation*}
B_{y}\left( f^{\left[ l\right] }\right) \left( x\right) =y^{-l\left(
l+1\right) /2}\left( B_{y}\left( f\right) \right) ^{\left[ l\right] }\left( 
\dfrac{x}{y^{l}}\right) 
\end{equation*}
In particular, we have

\begin{equation*}
B_{b}\left( f^{\left[ m-2\right] }\right) \left( x\right) =b^{-\left(
m-2\right) \left( m-1\right) /2}\left( B_{b}\left( f\right) \right) ^{\left[
m-2\right] }\left( \dfrac{x}{b^{m-2}}\right) 
\end{equation*}
where the power series in $x$\ are convergent on $\mathbb{C}$ if $N=+\infty $%
. Therefore 
\begin{equation*}
\left( B_{b}\left( f^{\left[ m-2\right] }\right) \right) ^{\prime }\left(
x\right) =b^{-\left( m-2\right) \left( m+1\right) /2}\left( B_{b}\left(
f\right) \right) ^{\left[ m-1\right] }\left( \dfrac{x}{b^{m-2}}\right) 
\end{equation*}
and 
\begin{equation*}
\left( B_{b}\left( f^{\left[ m-2\right] }\right) \right) ^{\prime \prime
}\left( x\right) =b^{-\left( m-2\right) \left( m+3\right) /2}\left(
B_{b}\left( f\right) \right) ^{\left[ m\right] }\left( \dfrac{x}{b^{m-2}}%
\right) 
\end{equation*}
We set $g=f^{\left[ m-2\right] }$ and $c=ab^{m-2}$. The degree of $g$ is\
superior to 1 or $g$ is transcendent. $c$ is a real number different from
zero and we have 
\begin{equation*}
\begin{tabular}{lllll}
$B_{b}\left( g\right) \left( c\right) $ & $=$ & $b^{-\left( m-2\right)
\left( m-1\right) /2}\left( B_{b}\left( f\right) \right) ^{\left[ m-2\right]
}\left( a\right) $ & $=$ & $0$ \\ 
$\left( B_{b}\left( g\right) \right) ^{\prime }\left( c\right) $ & $=$ & $%
b^{-\left( m-2\right) \left( m+1\right) /2}\left( B_{b}\left( f\right)
\right) ^{\left[ m-1\right] }\left( a\right) $ & $=$ & $0$ \\ 
$\left( B_{b}\left( g\right) \right) ^{\prime \prime }\left( c\right) $ & $=$
& $b^{-\left( m-2\right) \left( m+3\right) /2}\left( B_{b}\left( f\right)
\right) ^{\left[ m\right] }\left( a\right) $ & $\neq $ & $0$%
\end{tabular}
\end{equation*}
It follows that $B_{y}\left( g\right) $ satisfies the assumptions of Lemma
6.3 with $\left( c,b\right) \in \mathbb{C}^{2}$, where $c$ and $b$ are
different from $0$. Hence, there exists a convergent Puiseux series $k=\psi
\left( h\right) ,$ such that $B_{b+h}\left( g\right) \left( c+\psi \left(
h\right) \right) =0$, and whose the first term is 
\begin{equation*}
\left( \dfrac{-c^{2}}{b}h\right) ^{1/2}
\end{equation*}
It is possible to choose a determination of the power $z^{1/2}$, holomorphic
in an open subset containing $\mathbb{R}_{-}^{\ast }$ and such that $\left(
-1\right) ^{1/2}=i$. So, we have for $h>0$ and small enough 
\begin{equation*}
\psi \left( h\right) =i\dfrac{\left| c\right| }{\sqrt{b}}\sqrt{h}+o\left( 
\sqrt{h}\right) \text{.}
\end{equation*}
It follows that for positive small enough $h$, $\psi \left( h\right) $ will
have a nonzero imaginary part. It is the same for $c+\psi \left( h\right) $
since $c$ is a real number. We just prove that there is $\varepsilon >0$
such that $B_{y}\left( g\right) $ has at least one non-real zero, for $y\in 
\left] b,b+\varepsilon \right[ \subset \left] 0,1\right[ $. But the last
point of Proposition 3.1 shows that $g$ is $L_{2}$. Thus, we have a
contradiction because the point $i)$ of Theorem 4.1 shows that $B_{y}\left(
g\right) $ is also $L_{2}$, and has therefore only real zeros, for $y\in 
\left[ 0,1\right] $.
\end{proof}

\begin{remark}
\bigskip Theorem 6.1 can be used in the proof of Lemma 5.3 instead of the
point 3 of Theorem 3.3. It can also be used to prove with Theorem 5.1, that
RH implies actually that $\Delta _{n}\left( y\right) $ does not vanish on
the interval $\left[ 0,1\right[ $ for all $n\geq 0$.
\end{remark}

It is natural to think that the reciprocal of Theorem 6.1 is true. So, we
formulate the following conjecture.

\bigskip

\textbf{Conjecture} \textit{If }$f$\textit{\ is an entire real function
whithout gaps (in the sense of Definition 4) and }$\Omega =\sup_{n\geq
1}\left| \Omega _{n}\right| <+\infty $\textit{, then }$f$\textit{\ is }$%
L_{2} $\textit{\ with }$f\left( 0\right) \neq 0$, \textit{is equivalent to
that the zeros of }$B_{y}\left( f\right) $\textit{\ are simple for }$y\in %
\left] 0,1\right[ $\textit{.}

\bigskip

First, note that with the function $f^{\flat }$, the conjecture gives the
reciprocal of the final implication\ of Corollary 6.2, ie the conjecture
announced in the abstract

We just prove the necessity of the condition of simplicity of the zeros. We
now discuss the sufficiency of this condition.

For each $p\geq 1$ we can write the power series (see $\left( 8\right) $ et $%
\left( 9\right) $ above)

\begin{equation*}
\alpha _{p}\left( y\right) =\dfrac{A_{p}}{A_{p-1}}y^{p}\left( 1+\dfrac{%
A_{p-1}}{A_{p}}\sum\limits_{q=1}^{\infty }u_{p}\left( q\right) y^{q}\right)
\end{equation*}
where the power series in parentheses converges for $\left| y\right| <\rho
_{o}^{2}\Omega ^{-1}$ (as before, we assume $\rho _{o}^{2}\Omega ^{-1}<1$,
otherwise we have already seen that $f$ is $L_{2}$ without requirement of
simplicity of zeros). It follows that $\alpha _{p}\left( y\right) $ does not
vanish in a punctured disk $0<\left| y\right| <R_{p}\leq \rho _{o}^{2}\Omega
^{-1}$, where $R_{p}$ depends a priori on $p$. Each zeros of $B_{y}\left(
f\right) $, $x_{p}\left( y\right) =-\left( \alpha _{p}\left( y\right)
\right) ^{-1}$, is therefore a holomorphic function in this punctured disk.
We can now try to follow the reasoning of the point $ii)$ of Lemma 5.2, by
using the implicit function theorem. For each $p\geq 1$, this would lead to
an analytic continuation $\widehat{x_{p}}\left( y\right) $\ of the
holomorphic function $x_{p}\left( y\right) $, where $\widehat{x_{p}}\left(
y\right) $ is now defined in an open subsets containing $\left] 0,1\right[ $%
. The rest of the reasoning would be the same as that of Lemma 5.2. However,
there is now a challenge to make this reasoning rigorous, because we can not
exclude here the fact that some zeros of $B_{y}\left( f\right) $ tend to
infinity for certain values of $y$\ in the interval $\left] 0,1\right[ $.

\textbf{Acknowledgements :} I am grateful to H.Queff\'{e}lec for advices. I
also thank P.D\`{e}bes for useful discussions we had.

\bigskip

\textbf{References}

$\bigskip $

$\left[ 1\right] $ M. Balazard, Un si\`{e}cle et demi de recherches sur
l'hypoth\`{e}se de Riemann, Gaz. Math.126 (2010), p. 7-24.

$\left[ 2\right] $ J.P. Bezivin, Sur les \'{e}quations fonctionnelles aux
q-diff\'{e}rences, Aequationes Math. 43 (2-3), (1992), p.159-176.

$\left[ 3\right] $ V. Brugidou, A new method to determine the value or the
reality of zeros for certain entire functions, J.Math.Pures Appl. 94, Nb.3
(2010), p. 244-276.

$\left[ 4\right] $ V. Brugidou, Une g\'{e}n\'{e}ralisation de la formule du
triple produit de Jacobi et quelques applications, C. R. A. S. 349 (2011),
p. 357-484.

$\left[ 5\right] $ V.Brugidou, private communication to Paul
Malliavin (July 1, 2008) and others; in preparation for publication.

$\left[ 6\right] $ N.G. de Bruijn, The roots of trigonometric integrals,
Duke J. Math.17, (1950), p. 197-226.

$\left[ 7\right] $ T. Craven,G. Csordas, Composition theorems, multiplier
sequences and complex zero decreasing sequences, in: Value Distribution
Theory and Its Related Topics, G. Barsegian, I. Laine, C.C. Yang (Eds.),
Kluwer Academic, Dordrecht, 2004.

$\left[ 8\right] $ G. Csordas, T.S. Norfolk, R.S. Varga, The Riemann
hypothesis and the Turan inequalities, Trans.of the Amer.Math.Soc. 296, Nb.2
(1986), p. 521-541.

$\left[ 9\right] $ H.M. Edwards, Riemann's Zeta Function, Academic Press,
1974.

$\left[ 10\right] $ G. Fischer, Plane algebraic curves, AMS, Student
mathematical library, 2001.

$\left[ 11\right] $ H. Hille, Analytic function theory, vol. II, Ginn and
Co., Boston, 1962.

$\left[ 12\right] $ L. Iliev, Laguerre entire functions, Publishing house of
the Bulg. Acad. of Sciences, Sofia, 1987.

$\left[ 13\right] $ C.M. Newman, Fourier Transforms with only real zeros,
Proc. Amer. Math. Soc., 61 (1976), p. 245-251.

$\left[ 14\right] $ G. Polya, \"{U}ber die
algebraisch-funktionentheoretischen Untersuchungen von J.L.W.V. Jensen, Klg.
Danske Vid. Sel. Math-Fys. Medd. 7 (1927), p. 224-249.

$\left[ 15\right] $ E.C. Titchmarsh, The Theory of the Riemann
Zeta-Function, 2$%
%TCIMACRO{\UNICODE[m]{0xb0}}%
%BeginExpansion
{{}^\circ}%
%EndExpansion
$ ed., Oxford Press, 1986.

$\left[ 16\right] $ L.Di.Vizio, J.P. Ramis, J. Sauloy, C. Zhang, Equations
aux $q$-diff\'{e}rences, Gaz. Math. 96 (2003), p. 20-49.

$\left[ 17\right] $ http://www.aimath.org/pl/rhequivalences

\end{document}